\crefname{equation}{equation}{equations}
\Crefname{equation}{Equation}{Equations}
\numberwithin{equation}{section}
\title[Moebius inversion]{A Moebius inversion formula \protect\\ to discard tangled hyperbolic surfaces}
\author{Nalini Anantharaman\textsuperscript{1} and Laura Monk\textsuperscript{2}}
\address[1]{Coll\`ege de France, 11 place Marcelin Berthelot, 75005 Paris / IRMA, 7 rue Ren\'e Descartes, 67084 Strasbourg Cedex, France} 
\address[2]{School of Mathematics, University of Bristol, Bristol BS8 1UG, U.K.}
\email{nalini.anantharaman@college-de-france.fr}
\email{laura.monk@bristol.ac.uk}
\subjclass[2020]{Primary 58J50, 32G15; Secondary 05C80, 11F72}
\keywords{Random hyperbolic surfaces, Weil--Petersson form, moduli space,
  spectral gap, closed geodesic, Selberg trace formula.}
\theoremstyle{plain}
\newtheorem{thm}{Theorem}[section]
\newtheorem{prp}[thm]{Proposition}
\newtheorem{cor}[thm]{Corollary}
\newtheorem{lem}[thm]{Lemma}
\newtheorem*{namedthm}{\namedthmname}
\newcounter{namedthm}
\theoremstyle{definition}
\newtheorem{defa}[thm]{Definition}
\newtheorem{rem}[thm]{Remark}
\newtheorem{exa}[thm]{Example}
\newtheorem{nota}[thm]{Notation}
\newcommand*{\ov}[1]{%
  $\m@th\overline{\mbox{#1}}$%
}
\newcommand*{\ovA}[1]{%
  $\m@th\overline{\mbox{#1}\raisebox{3mm}{}}$%
}
\newcommand*{\ovB}[1]{%
  $\m@th\overline{\mbox{#1\rule{0pt}{3mm}}}$%
}
\newcommand*{\ovC}[1]{%
  $\m@th\overline{\mbox{#1\strut}}$%
}
\newcommand*{\ovD}[1]{%
  $\m@th\overline{\mbox{#1\vphantom{\"A}}}$%
}
\newcommand*{\ovE}[1]{%
  $\m@th\overline{\raisebox{0pt}[1.2\height]{#1}}$%
}
\newcommand*{\ovF}[1]{%
  $\m@th\overline{\raisebox{0pt}[\dimexpr\height+0.3mm\relax]{#1}}$%
}
\newcommand*{\ovG}[1]{%
  $\m@th\overline{\raisebox{0pt}[\dimexpr\height+1mm\relax]{#1\vphantom{A}}}$%
}
\newcommand\runderset[2][\sim]{\mathrel{\ensurestackMath{%
  \stackengine{-.2pt}{\scriptscriptstyle#2}{#1}{O}{c}{F}{F}{S}}}}
\newcommand{\N}{\mathbb{N}}
\newcommand{\Z}{\mathbb{Z}}
\newcommand{\R}{\mathbb{R}}
\DeclareMathOperator\argsh{argsh}
\DeclareSymbolFont{extraup}{U}{zavm}{m}{n}
\DeclareMathSymbol{\varheart}{\mathalpha}{extraup}{86}
\DeclareMathSymbol{\vardiamond}{\mathalpha}{extraup}{87}
\newcommand{\nwc}{\newcommand}
\nwc{\mf}{\mathbf} 
\nwc{\blds}{\boldsymbol} 
\nwc{\ml}{\mathcal} 
\nwc{\lam}{\lambda}
\nwc{\del}{\delta}
\nwc{\Del}{\Delta}
\nwc{\Lam}{\Lambda}
\nwc{\elll}{\ell}
\nwc{\IA}{\mathbb{A}} 
\nwc{\IB}{\mathbb{B}} 
\nwc{\IC}{\mathbb{C}} 
\nwc{\ID}{\mathbb{D}} 
\nwc{\IE}{\mathbb{E}} 
\nwc{\IF}{\mathbb{F}} 
\nwc{\IG}{\mathbb{G}} 
\nwc{\IH}{\mathbb{H}} 
\nwc{\IN}{\mathbb{N}} 
\nwc{\IP}{\mathbb{P}} 
\nwc{\IQ}{\mathbb{Q}} 
\nwc{\IR}{\mathbb{R}} 
\nwc{\IS}{\mathbb{S}} 
\nwc{\IT}{\mathbb{T}} 
\nwc{\IZ}{\mathbb{Z}} 
\def\bbbone{{\mathchoice {1\mskip-4mu {\rm{l}}} {1\mskip-4mu {\rm{l}}}
{ 1\mskip-4.5mu {\rm{l}}} { 1\mskip-5mu {\rm{l}}}}}
\def\bbleft{{\mathchoice {[\mskip-3mu {[}} {[\mskip-3mu {[}}{[\mskip-4mu {[}}{[\mskip-5mu {[}}}}
\def\bbright{{\mathchoice {]\mskip-3mu {]}} {]\mskip-3mu {]}}{]\mskip-4mu {]}}{]\mskip-5mu {]}}}}
\nwc{\setK}{\bbleft 1,K \bbright}
\nwc{\setN}{\bbleft 1,\cN \bbright}
\newcommand{\tfL}{R}
\nwc{\va}{{\bf a}}
\nwc{\vb}{{\bf b}}
\nwc{\vc}{{\bf c}}
\nwc{\vd}{{\bf d}}
\nwc{\ve}{{\bf e}}
\nwc{\vf}{{\bf f}}
\nwc{\vg}{{\bf g}}
\nwc{\vh}{{\bf h}}
\nwc{\vi}{{\bf i}}
\nwc{\vI}{{\bf I}}
\nwc{\vj}{{\bf j}}
\nwc{\vk}{{\bf k}}
\nwc{\vl}{{\bf l}}
\nwc{\vm}{{\bf m}}
\nwc{\vM}{{\bf M}}
\nwc{\vn}{{\bf n}}
\nwc{\vo}{{\it o}}
\nwc{\vp}{{\bf p}}
\nwc{\vq}{{\bf q}}
\nwc{\vr}{{\bf r}}
\nwc{\vs}{{\bf s}}
\nwc{\vt}{{\bf t}}
\nwc{\vu}{{\bf u}}
\nwc{\vv}{{\bf v}}
\nwc{\vw}{{\bf w}}
\nwc{\vx}{{\bf x}}
\nwc{\vy}{{\bf y}}
\nwc{\vz}{{\bf z}}
\nwc{\bal}{\blds{\alpha}}
\nwc{\bep}{\blds{\epsilon}}
\nwc{\barbep}{\overline{\blds{\epsilon}}}
\nwc{\bnu}{\blds{\nu}}
\nwc{\bmu}{\blds{\mu}}
\nwc{\bet}{\blds{\eta}}
\nwc{\rK}{\mathrm{K}}
\nwc{\bD}{\mathbf{D}}
\nwc{\n}{n_{\mathbf{S}}}
\nwc{\g}{g_{\mathbf{S}}}
\nwc{\bk}{\blds{k}}
\nwc{\bm}{\blds{m}}
\nwc{\bM}{\blds{M}}
\nwc{\bp}{\blds{p}}
\nwc{\bq}{\blds{q}}
\nwc{\bn}{\blds{n}}
\nwc{\bv}{\blds{v}}
\nwc{\bw}{\blds{w}}
\nwc{\bx}{\blds{x}}
\nwc{\bxi}{\blds{\xi}}
\nwc{\by}{\blds{y}}
\nwc{\bz}{\blds{z}}
\nwc{\cA}{\ml{A}}
\nwc{\cB}{\ml{B}}
\nwc{\cC}{\ml{C}}
\nwc{\cD}{\ml{D}}
\nwc{\cE}{\ml{E}}
\nwc{\cF}{\ml{F}}
\nwc{\cG}{\ml{G}}
\nwc{\cH}{\ml{H}}
\nwc{\cI}{\ml{I}}
\nwc{\cJ}{\ml{J}}
\nwc{\cK}{\ml{K}}
\nwc{\cL}{\ml{L}}
\nwc{\cM}{\ml{M}}
\nwc{\cN}{\ml{N}}
\nwc{\cO}{\ml{O}}
\nwc{\cP}{\ml{P}}
\nwc{\cQ}{\ml{Q}}
\nwc{\cR}{\ml{R}}
\nwc{\cS}{\ml{S}}
\nwc{\cT}{\ml{T}}
\nwc{\cU}{\ml{U}}
\nwc{\cV}{\ml{V}}
\nwc{\cW}{\ml{W}}
\nwc{\cX}{\ml{X}}
\nwc{\cY}{\ml{Y}}
\nwc{\cZ}{\ml{Z}}
\nwc{\fA}{\mathfrak{a}}
\nwc{\fB}{\mathfrak{b}}
\nwc{\fC}{\mathfrak{c}}
\nwc{\fD}{\mathfrak{d}}
\nwc{\fE}{\mathfrak{e}}
\nwc{\fF}{\mathfrak{f}}
\nwc{\fG}{\mathfrak{g}}
\nwc{\fH}{\mathfrak{h}}
\nwc{\fI}{\mathfrak{i}}
\nwc{\fJ}{\mathfrak{j}}
\nwc{\fK}{\mathfrak{k}}
\nwc{\fL}{\mathfrak{l}}
\nwc{\fM}{\mathfrak{m}}
\nwc{\fN}{\mathfrak{n}}
\nwc{\fO}{\mathfrak{o}}
\nwc{\fP}{\mathfrak{p}}
\nwc{\fQ}{\mathfrak{q}}
\nwc{\fR}{\mathfrak{r}}
\nwc{\fS}{\mathfrak{s}}
\nwc{\fT}{\mathfrak{t}}
\nwc{\fU}{\mathfrak{u}}
\nwc{\fV}{\mathfrak{v}}
\nwc{\fW}{\mathfrak{w}}
\nwc{\fX}{\mathfrak{x}}
\nwc{\fY}{\mathfrak{y}}
\nwc{\fZ}{\mathfrak{z}}
\nwc{\bT}{\mathbf{T}}
\nwc{\tA}{\widetilde{A}}
\nwc{\tB}{\widetilde{B}}
\nwc{\tE}{E^{\vareps}}
\nwc{\tk}{\tilde k}
\nwc{\tN}{\tilde N}
\nwc{\tP}{\widetilde{P}}
\nwc{\tQ}{\widetilde{Q}}
\nwc{\tR}{\widetilde{R}}
\nwc{\tV}{\widetilde{V}}
\nwc{\tW}{\widetilde{W}}
\nwc{\ty}{\tilde y}
\nwc{\teta}{\tilde \eta}
\nwc{\tdelta}{\tilde \delta}
\nwc{\tlambda}{\tilde \lambda}
\nwc{\ttheta}{\tilde \theta}
\nwc{\tvartheta}{\tilde \vartheta}
\nwc{\tPhi}{\widetilde \Phi}
\nwc{\tpsi}{\tilde \psi}
\nwc{\tmu}{\tilde \mu}
\nwc{\To}{\rightarrow} 
\nwc{\ad}{\rm ad}
\nwc{\eps}{\epsilon}
\nwc{\ep}{\epsilon}
\nwc{\vareps}{\varepsilon}
\def\ep{\epsilon}
\def\sq2{\sqrt{2}}
\def\t2{{\mathbb T}^2}
\def\s2{{\mathbb S}^2}
\def\N{\mathbb{N}}
\def\R{\mathbb{R}}
\def\Z{\mathbb{Z}}
\def\O{\mathcal{O}}
\nwc{\lap}{\bigtriangleup}
\nwc{\rest}{\restriction}
\nwc{\Diff}{\operatorname{Diff}}
\nwc{\diam}{\operatorname{diam}}
\nwc{\Res}{\operatorname{Res}}
\nwc{\Spec}{\operatorname{Spec}}
\nwc{\Vol}{\operatorname{Vol}}
\nwc{\Op}{\operatorname{Op}}
\nwc{\supp}{\operatorname{supp}}
\nwc{\Span}{\operatorname{span}}
\nwc{\dia}{\varepsilon}
\nwc{\cut}{f}
\nwc{\qm}{u_\hbar}
\def\hto0{\xrightarrow{\hbar\to 0}}
\def\rto0{\xrightarrow{r\to 0}}
\nwc{\la}{\langle}
\nwc{\ra}{\rangle}
\nwc{\lp}{\left(}
\nwc{\rp}{\right)}
\nwc{\bequ}{\begin{equation}}
\nwc{\be}{\begin{equation}}
\nwc{\ben}{\begin{equation*}}
\nwc{\bea}{\begin{eqnarray}}
\nwc{\bean}{\begin{eqnarray*}}
\nwc{\bit}{\begin{itemize}}
\nwc{\bver}{\begin{verbatim}}
\nwc{\eequ}{\end{equation}}
\nwc{\ee}{\end{equation}}
\nwc{\een}{\end{equation*}}
\nwc{\eea}{\end{eqnarray}}
\nwc{\eean}{\end{eqnarray*}}
\nwc{\eit}{\end{itemize}}
\nwc{\ever}{\end{verbatim}}
\newlength{\temp@wc@width}
\newlength{\temp@wc@height}
\newcommand{\widecheck}[1]{%
  \setlength{\temp@wc@width}{\widthof{$#1$}}%
  \setlength{\temp@wc@height}{\heightof{$#1$}}%
  #1\hspace{-\temp@wc@width}%
  \raisebox{\temp@wc@height+2pt}[\heightof{$\widehat{#1}$}]%
     {\rotatebox[origin=c]{180}{\vbox to 0pt{\hbox{$\widehat{\hphantom{#1}}$}}}}%
}
\newcommand{\Pwpo}{\mathbb{P}_g^{\mathrm{\scriptsize{WP}}}}
\newcommand{\Pwp}[1]{\Pwpo \left( #1 \right)}
\newcommand{\tfP}{\mathrm{TF}_{\cP}}
\renewcommand{\O}[2][ ]{\mathcal{O}_{#1} \left( #2 \right)}
\newcommand{\eqc}[1]{[ #1 ]_{\mathrm{loc}}}
\newcommand{\eq}{\, \raisebox{-1mm}{$\runderset{\mathrm{loc}}$} \,}
\newcommand{\smallbullet}{} 
\DeclareRobustCommand\smallbullet{%
  \mathord{\mathpalette\smallbullet@{0.5}}%
}
\newcommand{\smallbullet@}[2]{%
  \, \vcenter{\hbox{\scalebox{#2}{$\m@th#1\bullet$}}} \,%
}
\newcommand{\inj}{\kappa}
\newcommand{\tf}[1][g]{\mathrm{TF}^{\inj,\tfL}_{#1}}
\newcommand{\ft}{\mathbf{S}}
\newcommand{\ct}{\mathbf{c}}
\newcommand{\restr}{|}
\newcommand{\hbis}{\tilde{h}}
\begin{document}

\begin{abstract}
  Recent literature on Weil--Petersson random hyperbolic surfaces has met a
  consistent obstacle: the necessity to condition the model, prohibiting certain rare geometric
  patterns (which we call \emph{tangles}), such as short closed geodesics or embedded surfaces of
  short boundary length. The main result of this article is a Moebius inversion formula, allowing to
  integrate the indicator function of the set of tangle-free surfaces in a systematic, tractable
  way. It is inspired by a key step of Friedman's celebrated proof of Alon's conjecture. We further
  prove that our tangle-free hypothesis significantly reduces the number of local topological types
  of short geodesics, replacing the exponential proliferation observed on tangled surfaces by a
  polynomial growth.
\end{abstract}

\maketitle

\section{Introduction}

The study of random compact hyperbolic surfaces of large genus has drawn a lot of attention recently
(see for instance \cite{lipnowski2021, mirzakhani2013, Naud-det, rudnick2022, wu2022a, wu2022} as a
non-exhaustive list of articles). Throughout these works, it has become apparent that there exists a
set of ``bad'' hyperbolic surfaces, of small but non-zero probability, that one could benefit from
discarding -- in particular when studying the spectrum of the laplacian of random surfaces. These
``bad'' hyperbolic surfaces contain geometric patterns called \emph{tangles}, a notion that was
formalized and studied by the second author and Thomas in the context of hyperbolic geometry
\cite{monk2021a}, and first arose in the literature about random graphs. Tangles are responsible for
an exceptional proliferation of closed geodesics, as observed by Lipnowski--Wright in
\cite{lipnowski2021}. In our previous work \cite{Ours1}, we have demonstrated how surfaces with
tangles have a small spectral gap. In this article, we shall provide a systematic way to discard
surfaces containing tangles from the moduli space, without any tedious topological enumeration: this
is what we call the \emph{Moebius inversion formula}. Importantly, we  prove that discarding
tangled surfaces does allow to solve issues related to exponential proliferation of closed
geodesics.

\subsection{History of the word tangle in the graph setting}

The word \emph{tangle} was first coined by Friedman his proof of Alon's conjecture related to the
spectral gap of large random regular graphs \cite{friedman2003}. The word was later used by
Bordenave with a different meaning, but with the same purpose of discarding a set of small
probability of ``bad graphs'' \cite{bordenave2020}.  In the recent work of Huang and Yau on the fine
spectral statistics of random regular graphs, it is again necessary to condition on a subset of the
full set of regular graphs (the subset $\bar \Omega$ of \cite[Definition 1.1]{HuangYau}),
characterized by the absence of too many cycles in balls of a certain radius.

\subsection{Tangles in hyperbolic surfaces}

In this paper we use the notion of tangles introduced in \cite{monk2021a} for hyperbolic surfaces,
which is {closest} to Bordenave's in the graph setting.  Given two positive real numbers
$\inj$ and $\tfL$, a \emph{$(\inj, \tfL)$-tangle} (or in short, a tangle), is either a simple smooth
closed curve of length less than $\inj$, or a hyperbolic pair of pants or once-holed torus with all
boundary components of length less than $\tfL$.

A hyperbolic surface is said to be \emph{$(\inj, \tfL)$-tangle-free} (or, in short, tangle-free) if
it does not contain any $(\inj, \tfL)$-tangle. We denote by $\tf[g,n]$ the set of
$(\inj, \tfL)$-tangle-free orientable hyperbolic surfaces of signature $(g, n)$. It can be seen as a
subset of the Teichm\"uller space $\cT_{g,n}$ or of the moduli space $\cM_{g,n}$. We write $\tf$,
$\cT_g$, $\cM_g$ when $n=0$, i.e. for compact hyperbolic surfaces.

In several models of random compact hyperbolic surfaces, and for the right choice of parameters
$\inj, \tfL$, the set $\tf$ has probability close to $1$. For instance, for the Weil--Petersson
probability measure $\Pwpo$ on the moduli space $\cM_g$ of compact hyperbolic surfaces of genus $g$,
it is known that for any $\inj \in (0,1)$, for $\tfL= \alpha \log g$ with $\alpha \in (0,2/3)$,
\begin{equation*}
  1 - \Pwp{\tf} = \O{ \inj^2 + g^{\frac 3 2 \alpha-1}}.
\end{equation*}
See \cite[Theorem 4.2]{mirzakhani2013} and \cite[Theorem 5]{monk2021a}. 

\subsection{Polynomial counting result}

Our first main result is the following, showing that, for $L>0$, the number of topologies of
closed geodesics of length $\leq L$ in a tangle-free hyperbolic surface is polynomial in $L$
(instead of the expected exponential growth), provided the ratio $L/\tfL$ is bounded.

 \begin{thm}
  \label{t:TF_curves-intro}
  Let $\mathrm{Loc}_{\chi}^{\inj,\tfL, L}$ be the {set} of local topological types of closed
  geodesics of length $\leq L$ that can arise in a $(\inj, \tfL)$-tangle-free surface of Euler
  characteristic $-\chi$.  Then, \begin{equation*} \# \mathrm{Loc}_{\chi}^{\inj,\tfL, L} = \O[
    \chi]{ \frac L{\tfL} \Big(1+\frac{L}{\inj}\Big)^{18(1+\chi) \frac L{\tfL}} },
  \end{equation*}
 where $\O[ \chi]{\cdot}$ is the usual Landau ``big-O'' notation, with an implied constant depending on $\chi$.
  \end{thm}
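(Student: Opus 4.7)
The plan is to encode each local topological type as combinatorial data relative to a controlled pants decomposition of the filling subsurface of $\gamma$, and then count the possibilities for this data.

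First, given a closed geodesic $\gamma$ of length at most $L$ in a $(\kappa,\cL)$-tangle-free surface $X$, I would consider its filling subsurface $Y=Y(\gamma)\subset X$, the smallest essential subsurface with geodesic boundary containing $\gamma$. Since $Y\subset X$, one has $-\chi(Y)\le\chi$, so up to homeomorphism there are only $\O[\chi]{1}$ possibilities for $Y$. I would then construct a pants decomposition $\mathcal P=\{\gamma_1,\dots,\gamma_N\}$ of $Y$, where $N=\O[\chi]{1}$, with all internal curves of length in $[\kappa, C_\chi\cL]$ for some constant $C_\chi$. The lower bound is the simple-curve part of the tangle-free definition; the upper bound is a tangle-free analog of Bers' theorem, which uses the absence of embedded pants with all three boundaries shorter than $\cL$ to inductively shorten any pants decomposition of $Y$.

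Next, relative to $\mathcal P$, the free homotopy class of $\gamma$ is recorded by its Dehn--Thurston coordinates: intersection numbers $k_i=i(\gamma,\gamma_i)\ge 0$ and twist parameters $t_i\in\Z$ along each $\gamma_i$. Equivalently, one records the cyclic sequence of arc types traced by $\gamma$ in successive pants (at most six homotopy classes of arc per pants, rel boundary), together with the twist data. The twist parameters satisfy $|t_i|\le L/\ell(\gamma_i)+1\le L/\kappa+1$, since a Dehn twist around $\gamma_i$ changes the length of $\gamma$ by approximately $\ell(\gamma_i)\ge\kappa$. For the total number of crossings, each ``long'' excursion of $\gamma$ through the thick part of $Y$ costs $\cL$-scale length; splitting $\gamma$ into such excursions and using that each pants of $\mathcal P$ carries an $\cL$-long boundary should yield $\sum_i k_i\le C'_\chi\,L/\cL$ with $C'_\chi=O(1+\chi)$.

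Finally, I would count: the number of cyclic arc sequences of length $K\le C'_\chi L/\cL$ in an alphabet of $\O[\chi]{1}$ arc-in-pants types is at most $\O[\chi]{1}^K$, multiplied by a factor $L/\cL$ for the choice of cyclic starting position. For each of the at most $K$ pants transitions, there are at most $1+L/\kappa$ possible twist contributions, giving a total of $\frac{L}{\cL}(1+L/\kappa)^{18(1+\chi)L/\cL}$ after absorbing the constants; the exponent $18(1+\chi)$ packages the six arc types per pants and the count of pants in a decomposition of $Y$. The main obstacle is Step~2: establishing a tangle-free version of Bers' theorem, asserting that every subsurface of bounded topological complexity inside a tangle-free surface admits a pants decomposition with internal curves of length $\le C_\chi\cL$. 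A secondary subtle point is the refined intersection bound in Step~3, relating $\sum_i k_i$ to $L/\cL$ rather than to $L$ divided by a collar-width; this is where the tangle-free structure is used decisively, since on a general surface such a bound would fail.
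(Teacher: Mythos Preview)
Your overall architecture---encode the loop by combinatorial data relative to a pants decomposition of its filling subsurface, bound the twist-type data by $L/\kappa$, and bound the ``combinatorial length'' by $L/\cL$---matches the paper's. But the implementation in Step~3 has a genuine gap, and Step~2 as stated is probably not salvageable in the form you want.

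The critical estimate $\sum_i k_i \le C'_\chi\,L/\cL$ does not follow from the reasoning you outline. Even granting your tangle-free Bers' statement, so that every pants curve $\gamma_i$ of $\mathcal P$ has length in $[\kappa, C_\chi\cL]$, an essential arc of $\gamma$ through a pair of pants of $\mathcal P$ can be extremely short: if the pants has two boundaries of length $\approx\kappa$ and one of length $\approx \cL$, the seam joining the two short boundaries has length of order $e^{-\cL/2}$, not $\cL$. So the only universal lower bound on the cost of a crossing comes from the collar lemma and the injectivity radius, and it gives $i(\gamma,\mathcal P)=\mathcal{O}((L/\kappa)^2)$ or the like, never $L/\cL$. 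The heuristic ``each pants of $\mathcal P$ carries an $\cL$-long boundary, hence each excursion costs $\cL$'' conflates the length of a boundary curve with the length of an arc crossing the pants; these are unrelated. Your tangle-free Bers' statement is also not something the paper proves, and I do not see how to obtain it.

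The paper sidesteps both issues by changing what is being counted. It does \emph{not} bound the number of intersections of $\gamma$ with a fixed pants decomposition by $L/\cL$. Instead it decomposes the loop $\gamma$ itself into a cyclic sequence of at most $N$ \emph{volutes}: each volute is a piece of $\gamma$ that fills a subsurface $\Sigma_n$ of Euler characteristic $-1$, whose three associated simple curves $(g_{n-1},g_n,h_n)$ are explicitly built from sub-arcs of $\gamma$. The tangle-free hypothesis forces $\max(\ell(g_{n-1}),\ell(g_n),\ell(h_n))\ge \cL$, hence each volute has length $\ge \cL$, and since the total volute length is $\le 3\ell(\gamma)$ one gets $N\le 3L/\cL$ directly. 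The remaining data attached to each volute---the isotopy class of the triple $(g_{n-1},g_n,h_n)$ relative to a reference pants decomposition $\lambda$ of the filling surface, plus an integer $k_n\in[0,1+L/\kappa]$---is then bounded using only the injectivity-radius estimate $i(\cdot,\lambda_j)\le \mathcal{O}(L^2/\kappa^2)$ and a Dehn--Thurston-type count of simple multicurves with bounded intersection numbers. The pants decomposition $\lambda$ used here has curves of length $\le 2L$ (not $\le C_\chi\cL$); no Bers-type control is needed. In summary, the quantity that is genuinely bounded by $L/\cL$ is not the number of pants-curve crossings but the number of times $\gamma$ ``generates a unit of Euler characteristic'' as one traverses it; the volute construction makes this precise, and it is the missing idea in your sketch.
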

  In other words, discarding tangle-free surfaces avoids the exponential proliferation of {local}
  topological types of closed geodesics, up to scales $L$ growing like a multiple of $R$. Precise
  definitions of the notion of \emph{local topological type} and of the set
  $\mathrm{Loc}_{\chi}^{\inj,\tfL, L}$ are given in \S \ref{s:ltt}.

This result is a vast generalization of estimates obtained in \cite{Ours1}, corresponding to the case
$\chi=1$, to arbitrary Euler characteristic.  The theorem bears similarities with a counting result
obtained by Lipnowski and Wright in \cite[Theorem 2.1]{lipnowski2021}. However, there is an
important difference here: Lipnowski and Wright count closed geodesics in a \emph{given} tangle-free
hyperbolic surface (their method of proof refering explicitly to the fixed hyperbolic metric),
whilst our statement applies simultanously to \emph{all} tangle-free hyperbolic surfaces.

\subsection{Conditioning on the set of tangle-free surfaces}

There are two main reasons why conditioning a probability measure can be a dangerous operation.  On
the one hand, whilst the Weil--Petersson measure is a smooth measure, the conditioning introduces an
indicator function with a discontinuity. This is important when analytic tools are used to study
random hyperbolic surfaces, such as integration by parts, as done in \cite{Ours1,rudnick2022}.  On
the other hand, in the special case of the Weil--Petersson probability measure $\Pwpo$, multiplying
the measure by the indicator function $\bbbone_{\tf[g,n]}$ completely prevents the application of
one of the main computational tools of the field: the algebraic integration formulas provided by
Mirzakhani's works \cite{mirzakhani2007}.

One might think about the generalized McShane identities proven by Mirzakhani in
\cite{mirzakhani2007}: these can be seen as clever rewritings of the constant function
$\bbbone_{\cM_{g,n}}$ as a sum of ``manageable functions'', i.e. functions which can be unfolded to
the Teichm\"uller space, leading Mirzakhani to her celebrated recursion formulas for the total volume of
$\cM_{g,n}$. The same way that one needs to decompose $\bbbone_{\cM_{g,n}}$ in manageable functions
to compute the total mass of $\cM_{g,n}$, one needs to decompose $\bbbone_{\tf[g,n]}$ as a sum of
manageable functions to condition the Weil--Petersson model by the tangle-free hypothesis.  The
second main result of this paper, which we shall now present, precisely achieves this goal, in the
form of an identity apparented to the family of Moebius inversion formulas.

\subsection{Moebius inversion formulas in number theory and for hyperbolic surfaces}
Let us recall a fundamental construction in number theory, providing an analogy with our forthcoming
construction. Let $\cP$ be a subset of the set of prime numbers, playing the role of
``tangles''. ``Tangle-free'' numbers will be those not divisible by an element of $\cP$: we denote
this set by $\tfP$.  Let us also denote by $\bD_{\cP}$ the set of numbers having no prime factors
outside $\cP$.  We can easily construct by induction a function {$\tilde \mu_{\mathcal{P}}$}
defined on $\IN$, such that for all $n>1$,
\begin{equation} \label{e:tf_nt}
  \sum_{\substack{d | n\\   d>1 }} \tilde\mu_{\cP}(d)
  =
  \begin{cases}
    0 & \text{if } n \in \tfP\\
    1 & \text{otherwise.}
  \end{cases}
\end{equation}

The function $\tilde\mu_{\cP}$ can be seen to be uniquely defined by induction, and admits the explicit expression: $\tilde\mu_{\cP}(n)=(-1)^{j+1}$ if $n$ is square-free and can be written
as the product of $j$ distinct prime numbers belonging to $\cP$,  $\tilde\mu_{\cP}(n)=0$ otherwise. In particular, this implies that $\tilde\mu_{\cP}(n)=0$ if $n\not\in \bD_{\cP}$.
Note that \eqref{e:tf_nt} may be interpreted as a
rewriting of the indicator function of $\tfP$ as a sum over all divisors of an integer, or,
alternatively, as a form of inclusion-exclusion:
\begin{align*}  
 1-  \bbbone_{\tfP}(n)= \sum_{\substack{d | n\\ d>1} } \tilde\mu_{\cP}(d),
\end{align*}
equivalently
\begin{align*}  
   \bbbone_{\tfP}(n)=1- \sum_{\substack{d | n\\ d>1} } \tilde\mu_{\cP}(d)=- \sum_{d | n} \tilde\mu_{\cP}(d)
\end{align*}
where in the last equality we have can extended $\tilde\mu_{\cP}$ by setting $\tilde \mu_{\cP}(1)=-1$.
In other words, the function $\tilde\mu_{\cP}$ can be used to sieve out integers divisible by $\cP$.

If $\cP$ is the full set of prime numbers, then $\tfP$ is reduced to $\{1\}$ and $\tilde\mu_{\cP}$ is the opposite of the standard Moebius
function, which satisfies
for any positive integer $n$,
\begin{equation*}
  \bbbone_{\{1\}}(n) = \sum_{d | n} \mu(d).
\end{equation*}
  The Moebius function $\mu$ satisfies the
multiplicative property $\mu(mn) = \mu(m) \mu(n)$ as soon as $m$ and $n$ are coprime. The function $\tilde\mu_{\cP}$ satisfies $\tilde\mu_{\cP}(mn) = -\tilde\mu_{\cP}(m) \tilde\mu_{\cP}(n)$.

In his seminal work about random regular graphs, J. Friedman uses what he calls a \emph{generalized
  Moebius inversion} to write the indicator function of the set of tangled graphs: see
\cite[Proposition 9.5]{friedman2003}. This is an essential component to his proof of Alon's
conjecture. We shall follow his original idea to sieve out ``tangled'' surfaces (it is because we followed Friedman's construction that our Moebius functions have
a minus sign compared with the ``standard'' case, in particular the multiplicative property comes with an extra sign change).

The first ``Moebius inversion formula'' appearing in the context of random hyperbolic surfaces are
the inclusion-exclusion formulas used by Mirzakhani \cite{mirzakhani2013} and Lipnowski--Wright
\cite{lipnowski2021} to study one type of tangles on random hyperbolic surfaces: short closed
geodesics. For $\inj \in (0,1)$, they write the indicator of the $\inj$-thin set $\cM_g^{\leq \inj}$
(the set of elements of $\cM_g$ of injectivity radius smaller or equal to $\inj$) as
\begin{align}\label{e:usualIE}\bbbone_{\cM_g^{\leq \inj}}(X)
  = \sum_{q=1}^{+\infty} \mu_q
  \sum_{(c_1, \ldots, c_q)\subset X}
  \prod_{i=1}^q \bbbone_{[0, \inj]} (\ell_X(c_i)),\end{align}
where $\mu_q = (-1)^{q+1} / (2^qq!)$, the sum runs over all oriented multicurves $(c_1, \ldots, c_q)\subset
X$ and $\ell_X(c_i)$ denotes the length of the closed geodesic $c_i$ on $X$. 
The strength of this expression is that its right-hand side can be easily integrated on the moduli space,
because one can explicitly describe how to lift it to an integrable function on the Teichm\"uller
space.

In our previous work \cite{Ours1}, we used a similar approach to ``sieve out'' both kinds of
tangles from surfaces: short closed geodesics as well as embedded pairs of pants or
once-holed tori with a short boundary. Indeed, we can write $1- \bbbone_{\tf[g]}$ under the
form~\eqref{e:usualIE}, where the components $(c_1, \ldots, c_{{q}})$ may now either be short curves or
pairs of pants or once-holed tori with a short boundary. However, whilst closed geodesics of length
at most ${\kappa} < 1$ on a hyperbolic surface are disjoint (by the collar lemma), there is no reason why
pairs of pants or once-holed tori with a boundary shorter than $R = \alpha \log g$ would be. This
causes significant challenges in \cite{Ours1}, causing us to compute a painfully explicit
approximation of $1- \bbbone_{\tf[g]}$ up to errors decaying in $1/g^2$. The enumeration is
cumbersome and unsustainable if one wants to reach higher levels of precision.

\subsection{Our Moebius inversion formula}

We here replace these inclusion-exclusion formulas in a systematic manner, allowing for a more
abstract but more easily handled description of the indicator function $1- \bbbone_{\tf[g, n]}$ in
terms of functions which can be explicitly lifted to Teichm\"uller space and integrated.

We define a \emph{big moduli space} ${\mathbf{M}}$ which is, roughly, the set of all compact
hyperbolic surfaces, with all possible topologies (connected or not, with or without boundary --
possibly with some degenerate components reduced to 1-dimensional compact manifolds). The set
${\mathbf{D}}_{\inj, \tfL}\subset {\mathbf{M}}$ is the set of surfaces weakly filled by tangles (see \S
\ref{s:tangles} for detailed definitions). We then prove the following.
\begin{thm}[Theorem \ref{t:moebius}] \label{t:moebius-intro} Given any $\inj, \tfL >0$, there exists a unique function
  $\mu = \mu_{\inj, \tfL} : {\mathbf{M}} \To \R$, such that, for $Z \in \mathbf{M}$,
 \begin{itemize}
 \item if  $Z\notin  {\mathbf{D}}_{\inj, \tfL} $, then $\mu(Z)=0$;
 \item if $Z\in {\mathbf{D}}_{\inj, \tfL} $, then
\begin{align*}
1= \sum_{\tau \in \cS(Z)} \mu(\tau),
\end{align*}
where the sum runs over all possible sub-surfaces of $Z$ with geodesic boundary.
\end{itemize}
\end{thm}

{
We then provide more precise information on the Moebius function: its explicit values in the case of
one-dimensional tangles (Proposition \ref{prp:circles}) as well as upper bounds (Theorem
\ref{thm:boundmu}). Like the original Moebius function in number theory, the function $\mu$
possesses a multiplicative property (Proposition \ref{prp:multi} or Remark \ref{r:multi}). However
our function $\mu$ is less explicit as in the previously cited examples.}

Theorem \ref{t:moebius-intro} has the following corollary, which is the main goal of the whole construction:
 \begin{cor} For every $g$ and $n$, for every hyperbolic surface $X$ of signature $(g, n)$,
\begin{align} \label{e:inversionformula}1- \bbbone_{\tf[g,n]}(X)= \sum_{\tau\in \cS(X)} \mu(\tau).
\end{align}
 \end{cor}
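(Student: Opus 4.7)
The plan is to split into two cases according to whether $X$ is tangle-free, and in the tangled case to apply property (iii) of Theorem \ref{t:moebius-intro} to a canonical tangled sub-surface of $X$. As a preliminary step I would invoke property (ii): since $\mu(\tau) = 0$ for $\tau \notin \mathbf{D}_{\kappa, \cL}$, only sub-surfaces of $X$ that are themselves filled by tangles contribute to the right-hand side of \eqref{e:inversionformula}.

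If $X \in \mathcal{A}_{(g,n)}^{\kappa, \cL}$, any $(\kappa, \cL)$-tangle embedded in a sub-surface $\tau$ of $X$ would be a tangle in $X$, contradicting tangle-freeness; hence no $\tau \in \cS(X)$ lies in $\mathbf{D}_{\kappa, \cL}$, the right-hand side vanishes, and it matches the vanishing indicator on the left.

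In the non-trivial case $X \notin \mathcal{A}_{(g,n)}^{\kappa, \cL}$, I would introduce a canonical sub-surface $Z(X) \subseteq X$, defined as the geodesic-boundary completion of the union of all $(\kappa, \cL)$-tangles of $X$; by construction $Z(X) \in \mathbf{D}_{\kappa, \cL}$. The key identity to establish is
\[
\{\tau \in \cS(X) : \tau \in \mathbf{D}_{\kappa, \cL}\} \;=\; \{\tau \in \cS(Z(X)) : \tau \in \mathbf{D}_{\kappa, \cL}\}.
\]
The inclusion $\supseteq$ is immediate from $Z(X) \subseteq X$. Conversely, any $\tau \in \cS(X) \cap \mathbf{D}_{\kappa,\cL}$ is filled by $(\kappa, \cL)$-tangles, each of which is a tangle of $X$ and so lies in $Z(X)$; the hull property of $Z(X)$ then forces $\tau \subseteq Z(X)$. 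Applying property (iii) to $Z(X) \in \mathbf{D}_{\kappa, \cL}$ would then give
\[
\sum_{\tau \in \cS(X)} \mu(\tau) \;=\; \sum_{\tau \in \cS(Z(X))} \mu(\tau) \;=\; 1,
\]
matching the left-hand side.

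The main obstacle is the geometric content rather than the algebra: I need to make precise, using the definitions of \S\ref{s:bigM} and \S\ref{s:tangles}, that the union of all tangles of $X$ admits a canonical geodesic-boundary sub-surface $Z(X) \subseteq X$ which lies in $\mathbf{D}_{\kappa,\cL}$ and contains every tangled sub-surface of $X$. Once these facts are in place, the corollary becomes a one-line consequence of Theorem \ref{t:moebius-intro}.
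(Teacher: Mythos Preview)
Your approach is the one the paper has in mind: the paper does not write out a proof of the corollary, but it introduces the maximal derived tangle $\tau(X)$ (your $Z(X)$) in the Proposition of \S\ref{s:tangles} precisely to reduce the tangled case to the fundamental identity \eqref{e:fundidentity}. Your treatment of the tangle-free case is correct.

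There is, however, a genuine gap in the tangled case. You claim the inclusion
\[
\{\tau \in \cS(Z(X)) : \tau \in \mathbf{D}_{\kappa,\cL}\} \subseteq \{\tau \in \cS(X) : \tau \in \mathbf{D}_{\kappa,\cL}\}
\]
is ``immediate from $Z(X) \subseteq X$''. It is not. The paper's own Remark immediately following the definition of $\tau(X)$ warns that $\tau(X)$ may fail to be a sub-c-surface of $X$ in the sense of Definition~\ref{d:subsurface}: two distinct boundary components of $\tau(X)$ can be freely homotopic in $X$ and hence coincide as geodesics. When this happens, a sub-c-surface $\sigma \in \cS(\tau(X))$ whose boundary contains both of these components (for instance $\sigma = \tau(X)$ itself) violates the ``pairwise non-homotopic'' clause of Definition~\ref{d:subsurface} when viewed inside $X$, so $\sigma \notin \cS(X)$. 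Thus your key set identity can fail, and the two sums need not match term by term.

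The paper declares this issue ``harmless'' without spelling out why. To close the gap you must either (i) argue that the discrepancy between $\cS(\tau(X)) \cap \mathbf{D}_{\kappa,\cL}$ and $\cS(X) \cap \mathbf{D}_{\kappa,\cL}$ does not affect the value of the sum (for instance by exhibiting a bijection, after gluing the coinciding boundaries, with a suitable subset of $\cS(X)$ and checking $\mu$ is preserved), or (ii) bypass $\tau(X)$ altogether and prove directly that $\sum_{\tau \in \cS(X)} \mu(\tau) = 1$ whenever $X$ contains a tangle, by an induction parallel to the one in the proof of Theorem~\ref{t:moebius}. Your closing paragraph correctly senses that the geometry of $Z(X)$ is where the work lies, but the specific obstruction is this boundary-coincidence phenomenon, not merely the existence of $Z(X)$.
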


We present the construction of the Moebius function in the special context of getting rid of tangles -- with the purpose of applying it in the study of the spectral gap of random hyperbolic surfaces -- but we believe that the construction is of wider interest, as it can be implemented to remove sets of surfaces containing other kinds of geometric patterns.

\subsection*{Acknowledgements} The existence of such a construction for random graphs was explained to us by J. Friedman, who used it in \cite{friedman2003} to prove optimal spectral gap for random regular graphs; we are deeply thankful to him for sharing his ideas.

This research has received funding from the EPSRC grant EP/W007010/1 and from the European Research Council (ERC) under the
European Union’s Horizon 2020
research and innovation programme (Grant agreement No. 101096550).

\subsection*{Data statement}
No new data were generated or analysed during this study.

\section{Tangles in hyperbolic surfaces}\label{s:tangles}

This section introduces notations and definitions related to surfaces and curves which we use
thoughout this article. Importantly, we define the notion of tangle, derived tangle and tangle-free
surface.

 \subsection{The big moduli space $\mathbf{M}$}  
 The tangles we study in this article can be either 1-dimensional (short closed curves), or
 2-dimensional (pairs of pants or once-holed tori with short boundary). The Moebius function $\mu$
 will be supported in the set of surfaces filled by tangles; these surfaces may be disconnected, and
 may have 1-dimensional connected components. We need an extended notion of moduli space which
 encompasses all these possibilities.

\subsubsection{c-surfaces}

 Let $\N_0 = \{0, 1, \ldots \}$ denote the set of nonnegative integers.  We order the pairs
 $(g, n)\in \N_0^2$ by the order relation: $(g, n)\leq (g', n')$ if $2g+n-2 < 2g'+n'-2$, or if
 $2g+n-2 = 2g'+n'-2$ and $g\leq g'$.

\begin{nota}
  For $q\geq 1$, we denote by ${\mathfrak{S}}^{q}$ the set of $2q$-tuples
$$(\vg, \vn)=((g_1, n_1), \ldots, (g_q, n_q))\in (\N_0^2 {\setminus \{(0,0),(0,1),(1,0)\}})^q$$
satisfying $(g_i, n_i)\leq (g_{i+1}, n_{i+1})$ for $1 \leq i < q$.
\end{nota}

\begin{defa} A \emph{c-surface} of \emph{signature} $(\vg, \vn)\in {\mathfrak{S}}^{q}$ is a
  non-empty topological space $S$ with $q$ connected components $S= \bigsqcup_{j=1}^q \tau_j$
  labelled from $1$ to $q$, where
\begin{itemize}
\item $\tau_i$ is a $1$-dimensional oriented manifold diffeomorphic to a circle if $(g_i, n_i)=(0, 2)$;
\item if $2-2g_i-n_i<0$, $\tau_i$ is a $2$-dimensional orientable manifold with (or without)
  boundary, of signature $(g_i, n_i)$, with a numbering of boundary components by the integers
  $1, \ldots, n_i$.
\end{itemize}
We denote $\chi(\tau_i)= 2g_i-2+n_i$ and call it the \emph{absolute Euler characteristic} of
$\tau_i$.
\end{defa}

The c-surface $S$ can be decomposed as $S=(c, \sigma)$, where
$c=(\tau_1, \ldots, \tau_{q_1})=(c_1, \ldots, c_{q_1})$ is a collection of $1$-dimensional compact
manifolds, and $\sigma=(\tau_{q_1+1}, \ldots, \tau_{q_1+q_2})=(\sigma_1, \ldots, \sigma_{q_2})$ is a
collection of $2$-dimensional orientable compact manifolds with boundary ($q=q_1+q_2$ is the total
number of connected components). We will call $c$ the \emph{$1$-dimensional (or 1d) part of $S$} and
$\sigma$ the \emph{$2$-dimensional (or 2d) part of $S$}.  We say that $S$ is purely 1d if $q_2=0$, and
purely 2d if $q_1=0$. Each individual element $\tau_i$ will be refered to as a
  \emph{component} of $S$; a 1d-component if $i \leq q_1$ and a 2d-component otherwise.  

\begin{nota}
  Let $\chi(S)$ be the total absolute Euler characteristic of $S$, that is
  $$\chi(S)=\sum_{i=1}^q \chi(\tau_i) = \sum_{i=1}^{q_2} \chi(\sigma_i)$$ where
  $\chi(\sigma_i)=\chi(\tau_{q_1+i})=2g_{q_1+i}+n_{q_1+i}-2$.  We denote as $\mathfrak{c}(S)=q_1$
  the number of 1d components of the c-surface $S$, $\mathfrak{s}(S) = q_2$ its number of 2d
  components, and $\mathfrak{q}(S) = q = \mathfrak{c}(S) + \mathfrak{s}(S)$ its total number of
  connected components.
\end{nota}

{
  \begin{nota}
  The boundary $\partial S$ of $S = (\tau_1, \ldots, \tau_q)$ is defined as
  $\partial S = \bigsqcup_{i=1}^{q} \partial \tau_i$, with the convention that the boundary of a
  1d-component $c_i$ is itself (so that the 1d-components are all included in $\partial S$).
\end{nota}

Note that the boundary of $S$ is an ordered family of
$\mathfrak{c}(S)+\sum_{i=\mathfrak{c}(S)+1}^{\mathfrak{q}(S)} n_i$ closed circles. We can view the
boundary as oriented by taking the orientation of the 1d-components, and orienting the boundary of
the 2d components so that the surface lies of their left.}

\subsubsection{Extended notion of moduli space} \label{s:bigM} For $2-2g-n<0$, we define
$\mathcal{T}_{g,n}^*$ to be the set of marked hyperbolic surfaces of signature $(g,n)$, with $n$
labelled boundary geodesics of variable lengths $(x_1, \ldots, x_n) \in \R_{>0}^n$, up to
isotopy. Note that we do not allow for the elements of $\mathcal{T}_{g,n}^*$ to have cusps.

We denote by $\mathcal{M}_{g,n}^*$ the corresponding moduli spaces, obtained by quotienting the
Teichm\"uller space by the action of the Mapping Class Group. Recall that the latter is (modulo
isotopy) the group of orientation preserving homeomorphisms that fix the boundary, so that an
element of $\mathcal{M}_{g,n}^*$ still has a labelled boundary.

 We extend these notations to $(g,n)=(0,2)$, i.e. when $2-2g-n=0$:
 $\mathcal{T}_{0,2}^*= \mathcal{M}_{0,2}^*$ is the space of Riemannian metrics on an oriented
 circle, modulo orientation preserving isometry.
 
 \begin{nota}
   If $Z \in \mathcal{M}_{g,n}^*$, denote by $\ell(\partial Z)$ the total boundary length of $Z$,
   and $\ell^{\mathrm{max}}(\partial Z)$ the length of the longest boundary component of $Z$.
 \end{nota}

 For $(g,n)=(0,2)$, we also (naturally) write $\ell(Z) = \ell(\partial Z)$, which is the length of
 $Z$. The map $Z\mapsto \ell(Z)$ allows us to identify $\mathcal{M}_{0,2}^*$ with $\R_{>0}$.

 \begin{defa} \label{d:bigM}
    For $(\vg, \vn)=((g_1, n_1), \ldots, (g_q, n_q))\in {\mathfrak{S}}^{q}$, we
    define
    $$\mathcal{M}_{\vg, \vn}^*=\prod_{i=1}^q \mathcal{M}_{g_i,n_i}^*
    \qquad \text{and} \qquad
    \mathcal{T}_{\vg, \vn}^*=\prod_{i=1}^q \mathcal{T}_{g_i,n_i}^*.$$
    We then define the \emph{big moduli space},
 $$\mathbf{M}= \bigsqcup_{q=1}^{+\infty}\bigsqcup_{(\vg, \vn)\in  {\mathfrak{S}}^{q}}\mathcal{M}_{\vg, \vn}^*.$$
 \end{defa}
 For $(\vg,\vn) \in \mathfrak{S}^q$, the space $\cM_{\vg,\vn}^*$ can be interpreted as the moduli
 space of Riemannian metrics on a c-surface with $q$ labelled connected components, of respective
 signatures $(g_1, n_1), \ldots, (g_q, n_q)$, with a hyperbolic metric on the 2d components. The
 space $\mathbf{M}$ encompasses all of these possibilities.

 \subsection{Sub-c-surfaces and their geodesic representatives}
 We define the following notions of sub-elements for our c-surfaces.

 \begin{defa}\label{d:subsurface}
   Let $S$ be a c-surface.
   \begin{enumerate}
   \item We say a closed path on $S$ is \emph{simple} if it has no self-intersection. Non-contractible
      simple closed paths are called \emph{curves}.
   \item A \emph{multicurve} in $S$ is a family of curves $c=(c_1, \ldots, c_j)$ on $S$ such that,
     for all $i \neq i'$, $c_i$ and $c_{i'}$ are disjoint, and $c_{i}$ is homotopic to neither
     $c_{i'}$ nor $c_{i'}^{-1}$.
   \item We call {\em{sub-c-surface}} of $S$ a c-surface $S'$ together with an injection $\iota : S'\To S$
such that $\iota(\partial S')$ is a multicurve in $S$, and such that $\iota$ preserves the orientation of the 2d components of $S'$.
   \end{enumerate}
 \end{defa}

\begin{rem}
By identifying $\iota(S')$ and $S'$, we will actually see a sub-c-surface of $S$ as a subset of $S$. Note, however, that it comes with additional structure: a numbering of the connected components
of $\iota(S')$, a numbering of the boundary components of the 2d-part, and an orientation of the 1d-part. For us, the orientation of the 1d-part plays exactly the same role as the numbering of the boundary components: we think of them as allowing to distinguish between the ``sides'' of connected components of $S'$.  
 \end{rem}

Assume that $S$ has signature $(\vg, \vn)\in {\mathfrak{S}}^{k}$.  Let $S'$ be a sub-c-surface of $S$, and let $S'=(c', \sigma')$ be the
decomposition of $S'$ into its 1d and its 2d parts.  If $S$ is endowed with a metric
$ Z\in \cT^*_{\vg, \vn}$, then the collection $(c', \sigma')$ is isotopic to a {\em{geodesic
    sub-c-surface}} $(c'_Z, \sigma'_Z)$. {By this, we mean that the components of $c'_{Z}$
  together with the boundary components of $\sigma'_{Z}$ form a multigeodesic (a geodesic
  multicurve) on the hyperbolic surface $Z$.}  We call $(c'_Z, \sigma'_Z)$ the {\em{geodesic
      representative}} of $(c', \sigma')$ in $Z$.

 The geodesic representative $(c'_Z, \sigma'_Z)$, endowed with the Riemannian metric induced by $Z$,
 is a representative of an element of the moduli space $\cM^*_{\vg', \vn'}$, where $(\vg',\vn')$ is
 the signature of $S'$.

 \begin{nota}\label{n:sZ}
   Let $Z \in \mathbf{M}$ be a hyperbolic surface.  We denote by $\cS(Z)$ the set of geodesic
   sub-c-surfaces of $Z$. If $\tau\in \cS(Z)$, we denote by $\bar\tau$ its equivalence class in
   $\mathbf{M}$ (i.e. its equivalence class up to isometry, once we forget the marking).
 \end{nota}

\subsection{Subset filling a c-surface}

The following definition is a straightforward adaptation of the usual definition for surfaces.

{
\begin{defa}\label{d:fill}
  Let $S$ be a c-surface and $E$ be a subset of $S$. 
  \begin{enumerate}
  \item We say that $E$ \emph{fills} $S$ if the connected components of $S\setminus E$ are either
    contractible, or annular regions (i.e. cylinders) around a boundary component of the 2d-part of $S$.
  \item We say that $E$ \emph{weakly fills} $S$ if the connected components of $S\setminus E$ are
    either contractible, or annular regions in the 2d-part of $S$.
  \end{enumerate}
\end{defa}

In the following, we will only consider subsets $E$ which can be written as
$E = \bigcup_{t \in I}S_t$, where $S_t$ are sub-c-surfaces of $S$ (the union being not necessarily
disjoint). In this case,
we notice that a (weakly) filling set $E$ must contain all the 1d-components of $S$.

\begin{defa}
  Let $S$ be a c-surface and $E$ be a subset of $S$. We call \emph{c-surface weakly filled by $E$}
  the minimal sub-c-surface of $S$ containing $E$ (in the sense of inclusion), up to isotopy.
\end{defa}

The c-surface weakly filled by $E$ is, as the naming suggests, weakly filled by $E$. 
}

 \subsection{Definition of tangles}
 We introduce a notion of \emph{tangles} and \emph{derived tangles}, associated with two parameters $\inj, \tfL >0$ with $\inj< \tfL$. 

\begin{defa}
  \label{def:tangle}
  Let $Z \in \mathbf{M}$.
  \begin{enumerate}
  \item We say $Z$ is a {\em{$(\inj, \tfL)$-tangle}} (or in short, a tangle) if:
    \begin{itemize}
    \item either $Z\in \mathcal{M}_{0,2}^*$ and $\ell(Z)\leq \inj$;
    \item {or $Z\in \mathcal{M}_{g,n}^*$ for $2g-2+n=1$ and $\ell^{\mathrm{max}}(\partial Z) \leq \tfL.$}
    \end{itemize}
  \item {We call \emph{tangle in $Z$} any geodesic sub-c-surface of $Z$ which is a tangle.}
  \end{enumerate} 
  \end{defa}
  
  \begin{nota}
    We denote as $\tf[g,n]$ the set of hyperbolic surfaces of signature $(g, n)$ that do not contain
    $(\inj, \tfL)$-tangles, and call them \emph{tangle-free} surfaces.
  \end{nota}

  \subsection{Derived tangles}
  
 The following definition is directly inspired by the work of Friedman~\cite{friedman2003} in the
 context of graphs.
  
  \begin{defa}
  \label{def:dtangle}
  We call $Z\in \mathbf{M}$ a {\em{$(\inj, \tfL)$-derived tangle}} (or, in short, a derived tangle) if
  it is weakly filled by a countable family of $(\inj, \tfL)$-tangles.  We denote by
  ${\mathbf{D}}_{\inj, \tfL} \subset \mathbf{M}$ the set of all $(\inj, \tfL)$-derived tangles.
\end{defa}

In other words, a derived tangle is a c-surface $Z \in \mathbf{M}$ with a certain number of
connected components (some of them circles and some of them hyperbolic manifolds with geodesic
boundary), and such that there is a (possibly finite) sequence of submanifolds $(Z_t)_{t\in I}$
satisfying the following:
\begin{itemize}
\item for all $t \in I$, $Z_t$ is either a simple closed geodesic of length $\leq \inj$ or an
  embedded pair of pants or once-holed torus with all boundary components $\leq \tfL$;
\item $\bigcup_{t \in I} Z_t$ weakly fills $Z$.
\end{itemize}

{\begin{exa}
  Here are a few basic examples of derived tangles:
  \begin{enumerate}
  \item a family of curves $(c_1, \ldots, c_{q})$ of lengths $\leq \inj$;
  \item a family of pairs of pants $(P_1, \ldots, P_{q})$ with all boundary components $\leq \tfL$;
  \item a hyperbolic surface of signature $(0,4)$ with all boundary components of length $\leq \tfL$,
    and containing a simple closed geodesic of length $\leq \tfL$;
  \item a hyperbolic surface of signature $(0,4)$ containing two simple geodesics intersecting exactly twice, one of length $\leq \inj$, the other one separating a pair of pants with all boundary lengths $\leq \tfL$.
  \end{enumerate}
  
  Given that the surface filled by two pairs of pants (or more) can be very complex, depending on
  the relative position of the two pairs of pants, it is very hard to describe all derived tangles.
\end{exa}}

   The following proposition is straightforward:
   \begin{prp} If $X$ is a compact oriented hyperbolic surface, then $X$ contains a maximal derived
     tangle, defined as the geodesic sub-c-surface weakly filled by all the tangles contained
     in~$X$.  We denote it by $\tau_{\kappa, \tfL}(X)$ or just $\tau(X)$.
 \end{prp}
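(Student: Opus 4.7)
The plan is to construct $\tau(X)$ explicitly as the geodesic hull of the (finite) set of all $(\kappa,\cL)$-tangles contained in $X$, and then to verify the two properties packaged in the statement, namely that $\tau(X)$ is a derived tangle and that it contains every derived tangle in $X$.

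First I would argue that the set $\mathcal{T}(X)$ of $(\kappa,\cL)$-tangles contained in $X$ is finite. The 1-dimensional tangles are simple closed geodesics of length at most $\kappa$, and a compact hyperbolic surface admits only finitely many simple closed geodesics up to any fixed length bound (by Mirzakhani's polynomial counting, or more elementarily by the Bers constant argument). Each 2-dimensional tangle is an embedded pair of pants with all three boundary lengths at most $\cL$, hence determined by its three geodesic boundary components, so the same finiteness yields finiteness of 2d tangles.

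Second, I would construct $\tau(X)$ as follows. Let $U=\bigcup_{T\in\mathcal{T}(X)}T\subseteq X$, and let $\widetilde U$ be the union of $U$ with all connected components of $X\setminus U$ that are either contractible or annular neighborhoods of $\partial X$. The frontier of $\widetilde U$ is then a finite disjoint collection of essential simple closed curves in $X$, each of which has a unique geodesic representative; replacing them simultaneously by their geodesic representatives (using standard bigon-removal, possible because the curves are pairwise disjoint up to isotopy) produces a c-subsurface $\tau(X)\subseteq X$ whose 2d components have geodesic boundary. The simple closed geodesic 1d-tangles that are not absorbed inside a 2d component of $\tau(X)$ appear as 1d components of $\tau(X)$, in agreement with the c-surface convention of \S\ref{s:bigM}. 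An equivalent, sometimes cleaner, formulation is to enumerate $\mathcal{T}(X)=\{T_1,\dots,T_N\}$ and define $\tau_n$ inductively as the geodesic hull of $\tau_{n-1}\cup T_n$, using at each step uniqueness of the geodesic representative in a free homotopy class.

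Finally, I would verify the two defining properties. By construction, $\tau(X)$ contains every element of $\mathcal{T}(X)$ and the complement $\tau(X)\setminus U$ consists solely of contractible components and peripheral annuli around boundary components of $\tau(X)$; thus $\mathcal{T}(X)$ fills $\tau(X)$ in the sense of Definition~\ref{d:fill}, and $\tau(X)$ is a $(\kappa,\cL)$-derived tangle. For maximality, let $Z\subseteq X$ be any derived tangle. By Definition~\ref{def:dtangle}, $Z$ is filled by a subfamily of tangles $\{T_{i_k}\}\subseteq\mathcal{T}(X)$, each of which lies in $\tau(X)$; since $Z$ is itself a subsurface with geodesic boundary and equals the geodesic hull of the $T_{i_k}$ inside $X$, the minimality of the geodesic hull forces $Z\subseteq\tau(X)$. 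The main, essentially bookkeeping, obstacle is the geodesic-hull step: one must check that straightening the frontier curves does not create new intersections that would force a further simplification, which is handled by the bigon criterion together with the uniqueness of geodesic representatives.
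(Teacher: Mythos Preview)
The paper does not actually give a proof of this proposition: it is introduced with the phrase ``The following proposition is obvious'' and no argument is supplied. Your construction --- take the union of all tangles, pass to the filled regular neighbourhood, straighten the boundary to geodesics --- is exactly the natural one, and is presumably what the authors had in mind. So your approach and the paper's (implicit) approach coincide.

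Two small corrections are worth making. First, you write that ``each 2-dimensional tangle is an embedded pair of pants''; in fact the condition $2-2g-n=-1$ also allows once-holed tori with boundary length $\leq\cL$, and these must be included in $\mathcal{T}(X)$. Your finiteness argument is unaffected, since such a subsurface is determined by its single boundary geodesic (plus a choice of side). Second, in the construction of $\widetilde U$ you add annular neighbourhoods of $\partial X$, but Definition~\ref{d:fill} requires the annular pieces in $\tau(X)\setminus U$ to be peripheral to $\partial\tau(X)$, not to $\partial X$; you correct this later in the verification step, but the two passages should be made consistent. Relatedly, note the Remark immediately following the Proposition in the paper: $\tau(X)$ can fail to be a sub-c-surface in the sense of Definition~\ref{d:subsurface}, precisely because annular complementary regions between two tangles collapse when the boundary is geodesically straightened, producing repeated boundary geodesics. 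Your ``bookkeeping obstacle'' paragraph touches on this but does not quite isolate the phenomenon.
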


 {
 \begin{exa}
   We have $\tau(X) = (c_1, \ldots, c_{q})$ if and only if $X$ contains exactly $q$ primitive closed
   geodesics of length $\leq \inj$, all of which are simple and disjoint. Similarly,
   $\tau(X) = (P_1, \ldots, P_{q})$ as in the example above if and only if $X$ contains exactly $q$
   hyperbolic pairs of pants with all boundary components of length $\leq \tfL$, and the closures of
   these pairs of pants are disjoint (notably they do not share any boundary components). Note that
   in the latter case, $X$ might actually contain additional tangles: some of the boundary
   components of the pairs of pants could be closed geodesics of length $\leq \inj$ and hence
   tangles themselves.
 \end{exa}}
   
   
  \begin{prp}\label{p:desc_tangles}
    If $Z \in \mathbf{D}_{\inj,\tfL}$ is a $(\inj, \tfL)$-derived tangle, then for any 1d-component
    $c_i$ of $Z$,
    \begin{equation}
      \label{eq:upc}
      \ell(c_i)\leq \inj
    \end{equation}
    and for any 2d-component $\sigma_i$ of $Z$, 
 \begin{align} 
   \label{e:upsigma}
   \ell(\partial \sigma_i) \leq 9 \,\tfL \,\chi(\sigma_i).
 \end{align}
 
 \end{prp}

{
  \begin{proof}   
    The statement for each 1d-component $c_i$ is obvious: the components $c_i$ are $1$-dimensional
    manifolds and thus cannot contain a surface, so they must be $1$-dimensional tangles, that is to
    say, curves of length $\leq \inj$.
  
    To prove the rest of the proposition, it suffices to treat the case $\mathfrak{c}(Z)=0$ and
    $\mathfrak{q}(Z)=1$, that is to say, the case where $Z$ is a connected hyperbolic surface.  The
    surface $Z$ is weakly filled by an union of tangles $(Z_t)_{t \in I}$, for a set $I = \{1, \ldots, N\}$
    or $I=\N$. We can assume without loss of generality that $I=\N$ by letting $Z_t:=Z_N$ for $t>N$.

    For $t \in \N$, let $S_t\subset  Z$ be the c-surface with geodesic boundary weakly filled by
    $\bigcup_{t' \leq t} Z_{t'}$, and $\chi_t$ be its absolute Euler characteristic. The sequence
    $(S_t)_{t \in \N}$ is a non-decreasing sequence for inclusion.  We decompose the set of times
    $t \in \N$ for which $S_t \neq S_{t+1}$ as two disjoint sets $A, B \subset \N$, where
    $\chi_t \neq \chi_{t+1}$ for $t \in A$ and $\chi_t=\chi_{t+1}$ for $t \in B$.  We bound the
    cardinalities of $A$ and $B$ with two different arguments.
    \begin{itemize}
    \item By additivity of the Euler characteristic and by definition of $A$, the sequence
      $(\chi_t)_{t \in A}$ is an increasing sequence of integers, bounded by $\chi(Z)$: thus,
      $\# A\leq \chi(Z)$.
  \item If $S_t \subsetneq S_{t+1}$ and $\chi_t=\chi_{t+1}$, then $Z_{t+1}$ is a simple closed
    geodesic disjoint from $S_t$. As a consequence, $(Z_{t+1})_{t \in B}$ is a family of disjoint simple
    closed geodesics on $Z$. The cardinality of such a family always satisfies $\# B \leq 3
    \chi(Z)$. 
    \end{itemize}

    Since $\ell(\partial Z_t)\leq 3\tfL$ if $Z_t$ is 2-dimensional, and $\ell(Z_t)\leq \inj$ if $Z_t$
    is 1-dimensional, we have
    $\ell(\partial S_{t+1})\leq \ell(\partial S_{t})+ \max(2\inj, 3\tfL)=\ell(\partial S_{t})+3\tfL$
    for any $t \in A \cup B$. If $t \in B$, we even have
    $\ell(\partial S_{t+1})\leq \ell(\partial S_{t})+ 2\inj$ since $Z_{t+1}$ is 1-dimensional. As
    a consequence, for all~$t$,
    $$\ell(\partial S_{t})\leq 3 \tfL \#A + 2 \inj \# B \leq 9 \tfL \chi(Z)$$ which proves
   that  $\ell(\partial  Z) \leq 9 \,\tfL \,\chi(Z)$ taking $t = \max(A \cup B)+1$.

  \end{proof}
}

 \section{Moebius inversion formula}
 
 \subsection{The ``Moebius inversion formula'' and properties of the Moebius function}
 Let $\inj, \tfL$ be two real numbers with $\inj < \tfL$. Let us additionally assume (mostly for
 simplicity) that $\inj <2\argsh(1)$, so that all closed geodesics of length $\leq \inj$ in a
 hyperbolic manifold are simple and pairwise disjoint \cite[Theorem 4.1.6]{buser1992}. We prove the
 following. 
 
 \begin{thm} \label{t:moebius}There exists a unique function
   $\mu = \mu_{\inj, \tfL} : {\mathbf{M}} \To \R$, such that:
 \begin{itemize}
 \item the restriction of $\mu$ to each $\cM^*_{\vg, \vn}$ is invariant under the action of the full
   diffeomorphism group ({\emph{i.e.}} possibly permuting boundary components and connected
   components);
 \item for every $Z\in {\mathbf{M}} \setminus \mathbf{D}_{\inj,\tfL}$, $\mu(Z)=0$;
\item for every $Z\in {\mathbf{D}}_{\inj, \tfL} $,
\begin{align}\label{e:fundidentity}
1= \sum_{\tau \in \cS(Z)} \mu(\tau).
\end{align}
\end{itemize}
\end{thm}
In the last line, we map $\tau$ to an element of $\mathbf{M}$ to define $\mu(\tau)=\mu(\bar\tau)$, as explained in the Notation \ref{n:sZ}.
 
In addition, the function $\mu$ we construct satisfies the following properties.
First, we can explicit its values in the special case of purely 1d c-surfaces.

\begin{prp}
  \label{prp:circles}
  For $q \geq 1$ and $(\vg, \vn)=((0,2), \ldots, (0,2)) \in (\N_0^2)^q$, if
  $Z=(c_1, \ldots, c_q)\in \cM^*_{\vg, \vn}$, then
  \begin{align}\label{e:circles}
    \mu(Z) =
    \frac{(-1)^{q+1}}{2^q q!} \prod_{i=1}^q \bbbone_{[0,\inj]}(\ell(c_i)).
  \end{align}
\end{prp}
We hence recover the formula \eqref{e:usualIE} obtained in \cite{mirzakhani2013,lipnowski2021}.
We also prove that the Moebius function satisfies the following multiplicativity property.

\begin{prp}
  \label{prp:multi}
  If $Z=(c, \sigma)$ is the decomposition of $Z$ into 1d and 2d parts, then
  \begin{align}\label{e:multi}
    \mu (Z)= -\mu (c) \mu(\sigma).
    \end{align}
\end{prp}

\begin{rem} \label{r:empty}Denoting $\emptyset$ the empty subsurface and using the convention
  $\mu(\emptyset)=-1$, one can conveniently rewrite \eqref{e:fundidentity} as
 \begin{align}\label{e:fundidentity0}
0= \sum_{\tau \in \cS_0(Z)} \mu(\tau)
\end{align}
where $\cS_0(Z)$ is the set of all geodesic subsurfaces of
$Z$, including the empty one. This convention is also compatible with the multiplicative property
\eqref{e:multi}.
\end{rem}

\begin{rem}
  
  In Friedman's work on random graphs, the construction of the Moebius function is very short, and
  there is absolutely no need of any explicit description of $\mu$, whereas for random surfaces \cite{Expo, Ours2}, we
  needed the explicit formula \eqref{e:circles} in the 1d case (or at least the strong decay with
  respect to ${q =} \mathfrak{c}(Z)$), and a rough upper bound in the 2d case (see Theorem
  \ref{thm:boundmu}). There are several reasons for these complications.

  For surfaces, we have to deal with the possibility of closed geodesics of length $\leq \inj$, for
  $\inj$ arbitrarily small, as one of the reasons for proliferation of long closed geodesics.  As a
  consequence, some of the tangles to be removed are circles. Our derived tangles may thus have an
  arbitrary number of connected components, while keeping zero Euler characteristic. On the
  opposite, on a graph, a closed geodesic path always has length $\geq 2$, so there is no need to
  remove short closed geodesics. Tangles in Friedman's work have non-zero Euler characteristic:
  bounding the Euler characteristic of a derived tangle by $M$ automatically implies bounding its
  number of connected components (interestingly, this remark was made by J. Friedman just after
  Lemma 9.6 in \cite{friedman2003}).

  In addition, in \cite[\S 9]{friedman2003}, J. Friedman uses in a crucial way an order relation on
  graphs, such that if two graphs $G, G'$ satisfy $G\leq G'$ then the counting function of geodesics
  on $G'$ is smaller than the counting function of geodesics on $G$. He can then reduce the
  discussion to the set of {\em{minimal tangles}} for this order relation, which is finite for a
  given Euler characteristic. See also \cite{FriedmanKohlerII}. We do not know if an analogue of this
  order relation can be implemented for surfaces.  In the graph case, the upshot is that it is
  obvious that the Moebius function is bounded on the (finite) set of minimal derived tangles of a
  given Euler characteristic; in the surface case, some work is needed to obtain the bound
  presented in the forthcoming Theorem \ref{thm:boundmu}.
\end{rem}

\subsection{Proof of the Moebius inversion formula}

We are now ready to prove the Moebius inversion formula and the properties satisfied by the Moebius
function.  

In the proof, it will be useful to introduce the counting function $N(Z)$ for $Z \in \mathbf{M}$,
\begin{align}
  \label{e:N}
  N(Z):= \#\{\tau\in \cS(Z), \tau \mbox{ diffeomorphic to } Z\}.
\end{align}
This is a topological quantity, independent on the metric: recall that an element of $\cS(Z)$ may be seen as a subset of $Z$ together with additional structure (numbering of connected components, of boundary components, orientation of 1d-components).
Since in \eqref{e:N} $\tau$ coincides with $Z$ as a subset, $ N(Z)$ only counts the possible numberings.
This quantity may be explicitly described in terms of permutations of components having the same
topology, and re-labelling of boundary components, but we will not need a precise formula. Note that
if $Z=(c, \sigma)$ where $c$ is the one-dimensional part of $Z$ and $\sigma$ its two-dimensional
part, then $N(Z)= N(c)N(\sigma)$, simply because a $1$-dimensional manifold cannot contain a
$2$-dimensional one.

The existence and uniqueness of $\mu$ is proven by induction on the pair
$( \chi(Z), \mathfrak{c}(Z))$ with respect to the lexicographic order.  We initialize the recursion
by the case $\chi(Z)=0$, i.e. the purely 1d case, and obtain Proposition \ref{prp:circles} as a
side-product of the proof.

\begin{proof}[Proof of Theorem \ref{t:moebius} in the purely 1d case and Proposition \ref{prp:circles}]
  Let $Z=(c_1, \ldots, c_q)$ be a purely 1-dimensional c-surface. We observe that $N(Z)$ only
  depends on the number of components $q$, is equal to $N_q := 2^q q!$, corresponding to
  permutations and orientation changes of the components~$c_i$.  We use this observation to prove
  the property by induction on the integer $q$, the number of components of the purely 1d c-surface.

  If $Z=(c_1)$ is a single circle of length $> \inj$, then $c_1$ is not a derived tangle and
  $\mu(c_1)=0$. Otherwise, $c_1$ is a derived tangle, in which case \eqref{e:fundidentity} reads
  $1=N_1\mu(c_1)= 2 \mu(c_1)$, so that the formula holds if and only if we define $\mu(c_1) := 1/2$.

  Now suppose that the Moebius formula, as well as the expression \eqref{e:circles}, are known for
  all values $q'<q$, and denote $\mu_{q'}:=(-1)^{q'+1}/(2^{q'} q'!)$. Let $Z=(c_1, \ldots, c_q) $ be
  a purely 1d c-surface with $q$ components. If $Z$ is a not derived tangle (i.e. if one of the
  circles satisfy $\ell(c_i)>\inj$), then we have to have $\mu(Z)=0$. Otherwise, \eqref{e:fundidentity}
  is true if and only if
  \begin{align}\label{e:rankn}
    1 = N_q\mu(Z) + \sum_{1\leq q' <q} \frac{q!}{(q-q')!} 2^{q'} \mu_{q'}.
  \end{align}
  Indeed, the term $q! / (q-q')!$ is the number of ordered subsets of cardinality $q'$ in a set of
  $q$ elements (here the components of $Z$), and the factor $2^{q'}$ corresponds to the number of
  possible orientations of a multicurve with $q'$ components.  The identity \eqref{e:rankn} is true
  if and only if \eqref{e:circles} holds at rank $q$, thanks to the standard binomial identity
  $\sum_{q'=0}^q \frac{q!}{q'!(q-q')!} (-1)^{q'} = 0$.
\end{proof}

We are now ready to prove the result.

 \begin{proof}[Proof of Theorem \ref{t:moebius}]
   We prove Theorem \ref{t:moebius} by induction on the pair of integers
   $( \chi(Z), \mathfrak{c}(Z))$, ordered by lexicographic order. We have already proven the
   statement in the purely 1d case, i.e. when $\chi(Z)=0$.
   Let $Z \in \mathbf{M}$; we assume that we have uniquely defined the Moebius function $\mu$ for
   every $Z' \in \mathbf{M}$ with $\chi(Z') < \chi(Z)$ or $\chi(Z')=\chi(Z)$ and
   $\mathfrak{c}(Z') < \mathfrak{c}(Z)$.

   If $Z \notin \mathbf{D}_{\inj,\tfL}$, i.e. if $Z$ is not a derived tangle, then the Moebius
   function is uniquely defined by letting~$\mu(Z) := 0$, and there is nothing to prove.

   Otherwise, $\mu(Z)$ is uniquely defined by \eqref{e:fundidentity}. Indeed, since $N(Z)>0$, the
   Moebius inversion formula \eqref{e:fundidentity} is equivalent to
   \begin{equation}
     \label{e:mu_def}
     \mu(Z) = \frac{1}{N(Z)} \Bigg(1-\sum_{\substack{Z' \in \cS(Z) \\ Z' \text{ not diffeo to }
         Z}}
     \mu(Z') \Bigg).
   \end{equation}
   Note that any sub-c-surface $Z'$ of $Z$ satisfies $\chi(Z') \leq \chi(Z)$. In case of equality,
   if we write $Z=(c,\sigma)$ and $Z'=(c',\sigma')$, then we must have $\sigma = \sigma'$ as
   manifolds, i.e. $\sigma$ and $\sigma'$ just differ by numbering of the boundary components. This
   forces that $c'$ is a sub-c-surface of $c$ (which is not always the case, as some components of
   $c'$ could a priori have been in $\sigma$). As a consequence, if $Z'$ is not diffeomorphic to $Z$
   and $\chi(Z)=\chi(Z')$, then we must have $\mathfrak{c}(Z')<\mathfrak{c}(Z)$. Hence,
   \eqref{e:mu_def} does uniquely determine $\mu(Z)$ by induction, which is what we needed to
   conclude.
 \end{proof}

 Let us now prove the multiplicative property of the Moebius function $\mu$ that we have announced.

 \begin{proof}[Proof of Proposition \ref{prp:multi}]
   We once again prove this result by strong induction on the pair of integers
   $(\chi(Z),\mathfrak{c}(Z))$ for the lexicographic order. The result is obvious as soon as
   $\chi(Z)=0$ or $\mathfrak{c}(Z)=0$, with the convention $\mu(\emptyset)=-1$.
   
   Let $Z=(c, \sigma)$ be the decomposition of $Z$ into 1d and 2d part, with
   $\mathfrak{c}(Z)\geq 1$. Let us write the identity \eqref{e:fundidentity} in more detail,
   enumerating all elements of $\mathcal{S}(Z)$. We separate them in the following groups.
   \begin{enumerate}
   \item[(a)] Sub-c-surfaces of $Z$ diffeomorphic to $Z$, which contribute a term $\mu(Z) N(Z) $.
   \item[(b)] Sub-c-surfaces $Z'$ of $Z$ entirely contained in the 2-dimensional part $\sigma$ of $Z$
     (which are, in particular, strict sub-c-surfaces, since $\mathfrak{c}(Z) \geq 1$), contributing
     a term $\sum_{Z'\in \cS(\sigma)} \mu(Z')=1$ by the Moebius formula applied to $\sigma$.
   \item[(c)] Strict sub-c-surfaces $Z'$ of $Z$ sharing at least one 1-dimensional component with
     $Z$. Such a sub-c-surface can be decomposed into two groups of components we shall denote as $s$
     and~$\tau$, where $s \in \mathcal{S}(c)$ and $\tau \in \mathcal{S}_0(\sigma)$ (note that this
     is not the decomposition of $Z'$ into 1d and 2d components as $\tau$ itself may have 1d
     components).
   \end{enumerate}

   We notice that the two terms equal to one in \eqref{e:fundidentity}, i.e. its left-hand-side and
   the contribution of case (b), simplify. As a consequence, we obtain that the term (a) and (c) are
   opposite to each other. In detail, this gives:
   \begin{align*}
   1 = \,\, &N(Z)\mu(Z) +\sum_{Z'\in \cS(\sigma)} \mu(Z')\\
   +&\sum_{s\in \cS(c) }
     \sum_{\substack{\tau\in \cS_0(\sigma) \\ \chi(\tau)<\chi(\sigma)}}
     \frac{(\mathfrak{c}(s)+\mathfrak{c}(\tau))!}{\mathfrak{c}(s)! \mathfrak{c}(\tau)!} \mu(s, \tau)
     + \sum_{\substack{s\in \cS(c) \\  \mathfrak{c}(s) < \mathfrak{c}(Z)}}
     \sum_{\substack{\tau\in \cS_0(\sigma)\\ \chi(\tau)=\chi(\sigma)}}  \mu(s, \tau),
     \end{align*}
     so with the announced simplification,
   \begin{equation}
     \label{e:begin}
     N(Z)\mu(Z) = - 
     \sum_{s\in \cS(c) }
     \sum_{\substack{\tau\in \cS_0(\sigma) \\ \chi(\tau)<\chi(\sigma)}}
     \frac{(\mathfrak{c}(s)+\mathfrak{c}(\tau))!}{\mathfrak{c}(s)! \mathfrak{c}(\tau)!} \mu(s, \tau)
     - \sum_{\substack{s\in \cS(c) \\  \mathfrak{c}(s) < \mathfrak{c}(Z)}}
     \sum_{\substack{\tau\in \cS_0(\sigma)\\ \chi(\tau)=\chi(\sigma)}}  \mu(s, \tau).
   \end{equation}
   The combinatorial term
   $(\mathfrak{c}(s)+\mathfrak{c}(\tau))!/(\mathfrak{c}(s)! \mathfrak{c}(\tau)!)$ takes into
   account the fact that the $1$-dimensional components of $\tau$ may be intertwined, in their
   numbering, with the components of $s$. We have separated the terms depending on the value of
   $\chi(\tau)$ as follows.
     \begin{itemize}
     \item If $\chi(\tau)<\chi(\sigma)$ then $\mathfrak{c}(s)$ may take any value between $1$ and
       $\mathfrak{c}(Z)$, leading to the first sum.
     \item If $\chi(\tau)=\chi(\sigma)$, then, as discussed in the proof of Theorem \ref{t:moebius},
       $\tau$ and $\sigma$ just differ in the numbering of their boundary components, and
       $\mathfrak{c}(\tau)= 0$. This sum is therefore restricted to
       $\mathfrak{c}(s) < \mathfrak{c}(Z)$ since it includes only sub-c-surfaces non diffeomorphic
       to~$Z$.
     \end{itemize}     

     Let us call $ \sigma'$ is the $2$-dimensional component of $\tau$. Because we either have
     $\chi( \sigma')<\chi(\sigma)$ or $\mathfrak{c}(s,\tau) < \mathfrak{c}(Z)$, we can use the
     induction assumption and write $ \mu(s, \tau)= - \mu_{\mathfrak{c}(s,\tau)} \mu( \sigma')$. We
     now use the observation that $\mathfrak{c}(s,\tau) = \mathfrak{c}(s)+\mathfrak{c}(\tau)$ and 
     $$\forall (j,k) \in \N_0^2, \quad \frac{(j+k)!}{j!k!} \mu_{j+k} = - \mu_{j}\mu_{k}.$$
     Again by the induction assumption, $ \mu_{\mathfrak{c}(\tau)} \mu( \sigma')=-\mu(\tau)$, so that
     \begin{equation}
       \label{eq:1}
      - \frac{(\mathfrak{c}(s)+\mathfrak{c}(\tau))!}{\mathfrak{c}(s)! \mathfrak{c}(\tau)!} \mu(s,
       \tau)
       = -\mu_{\mathfrak{c}(s)} \mu_{\mathfrak{c}(\tau)} \mu(\sigma')
       =\mu_{\mathfrak{c}(s)} \mu(\tau)= \mu(s)\mu(\tau).
     \end{equation}
   Equation \eqref{e:begin} then becomes
 \begin{align*}
   N(Z)\mu(Z) 
   & = \Bigg( \sum_{\substack{s \in \cS(c) }} \mu(s) \Bigg)
   \Bigg(\sum_{\substack{\tau\in \cS_0(\sigma) \\ \chi(\tau)<\chi(\sigma)}}
    \mu(\tau) \Bigg)
   + \Bigg(\sum_{\substack{s\in \cS(c) \\ \mathfrak{c}(s) < \mathfrak{c}(Z)}}  \mu(s) \Bigg)
   \Bigg(\sum_{\substack{\tau\in \cS_0(\sigma)\\ \chi(\tau)=\chi(\sigma)}  }  \mu(\tau)\Bigg)
 \end{align*}
 which yields after application of the Moebius inversion formula to each term
 \begin{align*}
   N(Z) \mu(Z)  = 1 \, (0 -N(\sigma)\mu(\sigma))
   + (1-N_{\mathfrak{c}(Z)}\mu_{\mathfrak{c}(Z)} ) \, N(\sigma)\mu(\sigma)
   = - N_{\mathfrak{c}(Z)} N(\sigma) \mu_{\mathfrak{c}(Z)}\mu(\sigma)
 \end{align*}
 which allows us to conclude that $\mu(Z)=-\mu_{\mathfrak{c}(Z)}\mu(\sigma)$ because
 $N(Z)=N_{\mathfrak{c}(Z)}N(\sigma).$
\end{proof}

  \begin{rem} \label{r:multi}Let $(\vg, \vn)\in {\mathfrak{S}}^{q}$. There exists a uniquely defined
    partition $(\Lambda_j)_{1 \leq j \leq  M}$ of $\{1,\ldots, q\}$ into sub-intervals
    such that, for all $j, j' \in \{1, \ldots, M\}$, 
    \begin{align*}
       \forall (i,i') \in \Lambda_j \times \Lambda_{j'},
      \begin{cases}
        (g_i,n_i) = (g_{i'},n_{i'}) & \text{if } j=j' \\
        (g_i,n_i) < (g_{i'},n_{i'}) & \text{if } j < j'.
      \end{cases}
    \end{align*}
    or, in other words, the sequence of signatures is constant on each element $\Lambda_j$ of the
    partition, and (strictly) increasing as we go through the indices $j \in \{1, \ldots, M\}$.

    If $Z= (\sigma_1, \ldots, \sigma_q)\in \cM^*_{\vg, \vn}$, for $j \in \{1, \ldots, M\}$, let
    $Z_j=(\sigma_i)_{i\in \Lambda_j}$ be the collection of components of index $i \in
    \Lambda_j$. This separates the c-surface $Z$ in groups of shared signature. The argument
    yielding \eqref{e:multi} also shows a more general multiplicative property:
 $$\mu(Z)= (-1)^{M+1}\prod_{j=1}^M \mu (Z_j).$$
 \end{rem}

\subsection{Bounds on the values of the Moebius function}

Let us now provide bounds on the values of the Moebius function for purely 2d c-surfaces.

\begin{thm}
  \label{thm:boundmu}
  We can construct explicit increasing sequences $U_1,U_2 : \N \rightarrow \R$ such that, for any
  c-surface $Z \in \mathbf{M}$,
  \begin{align}\label{e:uppermu} |\mu(Z)|\leq \frac{U_1(\chi(Z))}{2^{\mathfrak{c}(Z)}\mathfrak{c}(Z)!}
    \, e^{\tfL U_2(\chi(Z))}.
 \end{align} 
\end{thm}

The definitions of the sequences $U_1$ and $U_2$ will be given in \eqref{e:Fn} and \eqref{e:Gndef}.  We
shall use the following classical estimate, adapted from \cite[Theorem 4.1.6 and Lemma 6.6.4]{buser1992}.
\begin{lem}[{\cite{Ours1}}]
  \label{lem:bound_number_closed_geod}
  Let $Z \in \mathcal{M}_{g,n}^*$, where $2g-2+n>0$. For any $L>0$,
  \begin{equation*}
    \# \{ \text{primitive {non-oriented} closed geodesics of length } \leq L \text{ on } Z\}
    \leq 205 \, {\chi(Z)} \, e^{L}.
  \end{equation*}
\end{lem}

\begin{proof}[Proof of Theorem \ref{thm:boundmu}]
  First, we note that, by the multiplicativity property of $\mu$ and the explicit expression in the
  1d-case, we only have to prove \eqref{e:uppermu} in the purely 2d-case, that is when $\mathfrak{c}(Z)=0$. We can therefore procede
  by induction on the absolute Euler characteristic $\chi(Z)$.  First note that if a purely 2d
  c-surface $Z$ is not a derived tangle, then $\mu(Z)=0$ clearly satisfies the desired bound, and
  there is nothing to prove. We therefore restrict the discussion to derived tangles, for which, by
  the Moebius inversion formula, we have:
  \begin{align}\label{e:step} |\mu(Z)|
    &\leq 1 +  \sum_{\substack{\tau \in \cS(Z) \\ \chi(\tau) < \chi(Z) }}|\mu(\tau)|.
   \end{align}
   Further note that the only non-trivial contributions to the right-hand-side of \eqref{e:step}
   come from derived tangles $\tau$ included in $Z$, as $\mu$ vanishes for the other
   $\tau \in \cS(Z)$.
   
   Let us first assume that $Z$ is a purely 2d c-surface of absolute Euler characteristic $1$. In other words,
   $Z = (\sigma_1)$ where $\sigma_1$ is either a once-holed torus or a pair of pants.  The only
   possible derived tangles in $\sigma_1$ that are of Euler characteristic $0$ are the unions of
   simple oriented geodesics on $\sigma_1$ of lengths $\leq \inj$. There are only $3$ non-oriented
   simple geodesics in a pair of pants; if $\sigma_1$ is a once-punctured torus, there can be at most two
   non-oriented simple geodesics of length $\leq\inj$ because we chose $\inj< 2\argsh(1)$. In any
   case, \eqref{e:step} implies that
   \begin{align}\label{e:init}
     |\mu(Z)|\leq 1 +  3\times 2 |\mu_1| +6\times 2^2 |\mu_2| + 6\times 2^3| \mu_3| =8.
    \end{align}
    As a conclusion, for any $Z$ of absolute Euler characteristic $1$, we have $|\mu(Z)| \leq 8$, and the
    claim is true taking $U_1(1)=8$ and $U_2(1)=0$.
   
    Now assume that $Z$ is a derived tangle of absolute Euler characteristic $ \chi(Z) =k$ with
    $k> 1$, and that \eqref{e:uppermu} holds for c-surfaces of absolute Euler characteristic
    $< k$.
   Equation \eqref{e:step} then implies 
    \begin{align}
      |\mu(Z)|\nonumber
      & \leq 1 +
        \sum_{\substack{\tau \in \cS(Z) \\ \chi(\tau) < k }}
      \frac{U_1(\chi(\tau))\, e^{\tfL U_2(\chi(\tau))}}
      {2^{\mathfrak{c}(\tau)}\mathfrak{c}(\tau)!}
      \bbbone_{\tau \in {\mathbf{D}}_{\inj, \tfL}}.
    \end{align}
    By \eqref{e:upsigma} and by monotonicity of the bound on $\mu$, we obtain the rough upper bound
    \begin{equation}
      \label{e:first}
      |\mu(Z)|
      \leq 1 +  U_1(k-1) \,e^{\tfL U_2(k-1)}
      \sum_{j=1}^{k-1}\#\{\tau \in \cS(Z),  \chi(\tau) =j, \ell(\partial \tau)\leq 9\tfL j \}.
    \end{equation}
    We bound the cardinal above using a rough estimate on
    $$\#\{\tau \in \cS(Z), \chi(\tau) =j, \ell(\partial \tau)\leq 9\tfL (k-1) \}.$$ A sub-c-surface
    $\tau \in \cS(Z)$ is fully characterized by the data of the ordered, oriented multicurve
    $\partial \tau= (\gamma_1, \ldots, \gamma_N)$ in $Z$, and of a partition of
    $\{1, \ldots, N\}=\bigsqcup_l \Lambda_l$ into non-empty intervals, such that $m, n$ belong to
    the same $\Lambda_l$ if and only if $\gamma_m$ and $\gamma_n$ border the same connected
    component of $\tau$ on their left.
    Remark that $N\leq 3k$, because a family of disjoint curves on the surface $Z$ contains at most
    $3 \chi(Z)=3k$ elements.

    Lemma \ref{lem:bound_number_closed_geod} implies that the number of {non-oriented}
    multicurves with $N\leq 3k$ components of length less than $9\tfL (k-1)$ is less that
    $3k(205 k e^{9\tfL (k-1)})^{3k}$. Choices of orientations give an extra factor $2^{3k}$.  This
    gives the very rough upper bound
\begin{align} \label{e:count_sub}
  |\mu(Z)|
  \leq 1 + U_1(k-1)e^{\tfL U_2(k-1)} 3k^2 T(k)(2\times 205 k e^{9\tfL (k-1)})^{3k} 
\end{align}
where $T(k)$ is an upper bound of the number of partitions of a set of cardinality $\leq 3k$, which
leads to our claim 
 \begin{align*}
 |\mu(Z)|& \leq  U_1(k)e^{\tfL U_2(k)}
 \end{align*}
  if we define $U_1, U_2$ by the recursion relations
  \begin{align}
    \label{e:Fn}& U_1(k)=  U_1(k-1) \times 3 k^2 T(k)(410 k)^{3k}+1\\
    \label{e:Gndef}& U_2(k)= U_2(k-1) + 27k (k-1)
  \end{align} 
with the initial values $U_1(1)= 8$ and $U_2(1)=0$.
   \end{proof}
  

  \section{Counting topological types of geodesics in tangle-free surfaces}

Let us now prove that the tangle-free hypothesis does prevent exponential proliferation of closed
geodesics, at scales which are constant multiples of $\tfL$.
  
\subsection{Local topological types} \label{s:ltt} Recall from \cite{Ours1} the notion of
\emph{local topological type} of a loop (an oriented closed path). We omit some of the notations
from \cite{Ours1} which are not used in the present paper; the reader can consult the full article
for additional details.

\begin{defa}
  A \emph{filling type} is a pair $(g_{\ft},n_{\ft}) \in \mathfrak{S}^1$. For each
  filling type $(g_{\mathbf{S}},n_{\mathbf{S}})$, let us \emph{fix} a  smooth oriented surface
  $\mathbf{S}$ of signature $(g_{\ft},n_{\ft})$.
\end{defa}

  \begin{defa}
  \label{def:local_type}
  A \emph{local loop} is a pair $(\ft,\ct)$, where $\ft$ is a filling type and $\ct$ is a primitive
  loop filling $\ft$. Two local loops $(\ft,\ct)$ and $(\ft',\ct')$ are said to be \emph{locally
    equivalent} if $\ft=\ft'$ (i.e.  $g_{\ft} = g_{\ft'}$ and $n_{\ft}=n_{\ft'}$), and there exists
  a positive homeomorphism $\psi : \ft \rightarrow \ft$, possibly permuting the boundary components
  of $\ft$, such that $\psi \circ \ct$ is freely homotopic to $\ct'$. This defines an equivalence
  relation $\eq$ on local loops. Equivalence classes for this relation are denoted as
  $\mathbf{T} = \eqc{\ft, \ct}$ and called \emph{local (topological) types} of loops.
\end{defa}

Let us now define a notion of local topological type for loops on a hyperbolic surface
$Z \in \cM_{g,n}^*$, where $2g-2+n>0$.

\begin{nota}
  Let $\gamma$ be a loop on a hyperbolic surface $Z \in \cM_{g,n}^*$. We denote by
  $\ft(\gamma)\subset Z$ the \emph{surface filled by} $\gamma$ in $Z$, i.e. the minimal embedded
  surface in $Z$ containing $\gamma$ (up to isotopy).
\end{nota}

The surface $\ft(\gamma)$ is constructed in \cite[\S 4]{Ours1} by taking a regular
neighbourhood~$\cN$ of $\gamma$, to which we add the contractible components of $Z\setminus \cN$ if
there are any. Note that, because we do not add cylinders in the complement of $\cN$, the filled
surface is not necessarily a sub-c-surface of $Z$ according to our conventions (we would need to
take the surface \emph{weakly filled} by $\gamma$ for this property to hold).

\begin{defa}
  Let $\eqc{\ft,\ct}$ be a local topological type.  A loop $\gamma$ in a hyperbolic surface~$Z$ is
  said to \emph{belong to the local topological type}~$\eqc{\ft,\ct}$ if there exists a positive
  homeomorphism $\phi : \ft(\gamma) \rightarrow \ft$ such that the loops $\phi \circ \gamma$
  and $\ct$ are freely homotopic in~$\ft$.

  Two loops $\gamma_1$, $\gamma_2$ in two hyperbolic surfaces $Z_1$, $Z_2$ are said to be
  \emph{locally equivalent} if they belong to the same local topological type.
\end{defa}

\subsection{Statement of the counting result}

We are now ready to state our counting result.
\begin{defa} Take $0<\inj< \tfL$, $L>0$ and $\ft$ a filling type.

  We denote by $\mathrm{Loc}_{\ft}^{\inj,\tfL, L}$ the set of local topological types
  $\eqc{\ft, \ct}$ such that there exists a $(\inj, \tfL)$-tangle-free surface $Z$ and a
  homeomorphism $\phi: \ft\rightarrow Z$ such that $\phi(\ct)$ is freely homotopic to a geodesic of
  length $\leq L$ on~$Z$.

If $\chi$ a positive integer, we define
$$\mathrm{Loc}_{\chi}^{\inj,\tfL, L}
:= \bigsqcup_{\substack{\ft \text{ filling type} \\ \chi(\ft)\leq \chi}}\mathrm{Loc}_{\ft}^{\inj,\tfL, L}$$ 
where the union is over the (finite) set of filling types $\ft$ with $\chi(\ft) = 2g_{\ft}-2+n_{\ft}\leq \chi$.
\end{defa}

Obviously, the set $\mathrm{Loc}_{\chi}^{\inj,\tfL, L}$ is decreasing as a function of $\inj$ and
$\tfL$, and increasing as a function of~$L$.  The rest of this article is devoted to proving that
the cardinality of $\mathrm{Loc}_{g}^{\inj,\tfL, L}$ is at most polynomial in $L$ if $\inj>0$ is
fixed and the ratio $L/\tfL$ is bounded.

 \begin{thm}
  \label{t:TF_curves}
  Let $\ft$ be a filling type. There exists a constant $C_{\ft}>0$ such that, for any
  $0 < \inj < \tfL$ and $L>0$,
    \begin{equation*}
      \# \mathrm{Loc}_{\ft}^{\inj,\tfL, L} \leq C_{\ft} \,
      \frac L{\tfL}
      \left(3\frac{(4L)^2}{\inj^2} +3\right)^{ 6 L(6g_{\ft}-6   + 2n_{\ft}) /\tfL}
      \left(\frac{L}{\inj} + 1\right)^{6 L/\tfL}.
  \end{equation*}
\end{thm}

\begin{cor}
  Let $\chi$ be a positive integer. There exists a constant $C_\chi > 0$ such that, for any
  $0 < \inj < \tfL$ and $L>0$,
    \begin{equation*}
    \# \mathrm{Loc}_{\chi}^{\inj,\tfL, L} \leq C_\chi \,  \frac L{\tfL}
      \left(3\frac{(4L)^2}{\inj^2} +3\right)^{18\chi L /\tfL}
      \left(\frac{L}{\inj}+1\right)^{6 L/\tfL} .
  \end{equation*}
  \end{cor}
  This directly implies Theorem \ref{t:TF_curves-intro}. When applying this result to random
  hyperbolic surfaces, the case $\tfL=\alpha \log g$ and $L=A\log g$ is especially relevant; for this
  choice of parameters, we have the following corollary:
\begin{cor}
  \label{cor:TF_curves}
  Let $\chi\geq 1$, $\inj, \alpha \in (0,1)$ and $A \geq 1$. Then,
    \begin{equation*}
      \# \mathrm{Loc}_{\chi}^{\inj,\alpha \log g, A\log g} = \O[\inj, \alpha, A,\chi]{(\log g)^{\beta_{\inj,\alpha,A, \chi}}}
  \end{equation*}
  for a $\beta_{\inj, \alpha, A, \chi} > 0$ depending only on $\inj$, $\alpha$, $A$ and $\chi$.\end{cor}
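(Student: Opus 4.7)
The plan is to apply Theorem \ref{t:TF_curves} directly with the specified parameters and extract the powers of $\log g$. With $\cL = \kappa \log g$ and $L = A \log g$, the ratio $L/\cL = A/\kappa$ is a constant depending only on $A$ and $\kappa$, so each exponent $3L/\cL$ and $9 \chi L/\cL$ appearing in the second bound of Theorem \ref{t:TF_curves} reduces to a constant depending only on $\kappa$, $A$, and $\chi$.

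Next, I would analyse the two base factors. The quantity $16 L^2/\kappa^2 + 1 = 16 A^2 (\log g)^2/\kappa^2 + 1$ is $\O[\kappa,A]{(\log g)^2}$, and $L/\kappa + 1 = A \log g/\kappa + 1$ is $\O[\kappa,A]{\log g}$. Inserting these into the bound of Theorem \ref{t:TF_curves} gives
\begin{equation*}
\# \mathrm{Loc}_{\chi}^{\kappa, \cL, A\log g} = \O[\kappa,A,\chi]{(\log g) \cdot (\log g)^{18 \chi A/\kappa} \cdot (\log g)^{3A/\kappa}},
\end{equation*}
and collecting exponents yields $\beta_{\kappa,A,\chi} = 1 + 18 \chi A/\kappa + 3A/\kappa$, which depends only on $\kappa$, $A$, $\chi$ as required.

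There is no real obstacle here: the corollary is a bookkeeping consequence of the main theorem, whose point is precisely that when $L/\cL$ stays bounded, the exponential-in-$L$ contribution disappears and the remaining dependence on $L$ (hence on $\log g$, for this choice of parameters) is polynomial. The only minor care needed is to confirm that the implicit constant in Theorem \ref{t:TF_curves} absorbs the additive $+1$'s and the leading factor $L/\cL = A/\kappa$ into a constant that depends only on $\kappa$, $A$, and $\chi$.
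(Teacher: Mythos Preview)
Your argument is correct and matches the paper's treatment: the corollary is presented there as an immediate specialisation of Theorem~\ref{t:TF_curves}, with no separate proof given. One harmless slip in your display: the leading factor $L/\cL = A/\kappa$ is a constant (as you yourself note twice), not $\log g$, so the extra $+1$ in your $\beta$ is unnecessary --- though the resulting bound is of course still valid.
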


\subsection{Counting simple curves in surfaces}
Our counting argument relies mostly on the following estimate, based on Thurston's description of the space of simple curves, as exposed in~\cite{FLPf}. We denote by $i(\gamma_1, \gamma_2)$ the geometric intersection 
of two loops $\gamma_1, \gamma_2$ in minimal position 

In all the forthcoming discussion, $S$ is a fixed compact surface of signature $(g_S, n_S)$.

\begin{nota}
  Let $\lambda=(\lambda_1, \ldots, \lambda_{3g_S-3 +n_S})$ be a multicurve cutting $S$ into surfaces
  of Euler characteristic $-1$. Let $N, I, J$ be three positive integers. Call
  $\cC_\lambda(N, I, J)$ the set of $N$-tuples of simple curves $(\gamma_1, \ldots, \gamma_N)$ in
  minimal position, such that
  \begin{equation*}
    \begin{cases}
      i(\gamma_j, \lambda_k)\leq I
      & \text{for all } j \in \{1, \ldots, N\} \text{ and }  k \in \{1, \ldots, 3g_S-3+n_S\} \\
      i(\gamma_j, \gamma_k)\leq J
      & \text{for all distinct } j, k \in \{1, \ldots, N\}.
    \end{cases}
  \end{equation*}
\end{nota}

{\begin{defa}
  We define $\overline{\cC}_\lambda(N, I, J)$ to be the quotient of $\cC_\lambda(N, I, J)$ by the
  equivalence relation:
\begin{align}\label{e:r}
  (\gamma_1, \ldots, \gamma_N) ~ \cR ~ (\gamma'_1, \ldots, \gamma'_N)
  \quad \Leftrightarrow \quad
  \parbox{8cm}{$\exists \phi : S \rightarrow S$ positive homeomorphism:
  \\ $\phi(\gamma_j)$  homotopic to $\gamma_j'$ for all $j \in \{1, \ldots, N\}$.}
\end{align}
\end{defa}
}

\begin{prp}\label{p:fundamental_count} With the notations above,
\begin{align}\label{e:boundCNIJ}
  \# \overline{\cC}_\lambda(N, I, J) \leq
  2^{ 3g_S-3 +n_S}\Big( 3(I+1)(J+1)\Big)^{N(3g_S-3   +n_S)}.
\end{align}
\end{prp}

{
\begin{rem}
  We shall actually prove the upper bound \eqref{e:boundCNIJ} for equivalence classes w.r.t. a finer
  equivalence relation than $\mathcal{R}$, obtained adding the requirement that $\phi$ is in the
  group generated by Dehn twists around curves in $\lambda$ in \eqref{e:r}. 
\end{rem}}

The proof follows standard arguments borrowed from Thurston's description of the space of simple
curves. We rely on the reference \cite[Expos\'e 4]{FLPf}.
 
\begin{proof}
  To an element $(\gamma_1, \ldots, \gamma_N)\in \cC_\lambda(N, I, J)$ we associate indices 
  $$m^j(k):=i(\gamma_j, \lambda_k), \quad \text{where }j \in \{1, \ldots, N\}, k \in \{1, \ldots, 3g_S-3+n_S\}.$$
  If $B$ is a subset of $\{1, \ldots, 3g_S-3 +n_S\}$, call $\cC_{\lambda, B}(N, I, J)$ the set of
  elements of $\cC_\lambda(N, I, J)$ such that $m^j(k)=0$ for all $j \in \{1, \ldots, N\}$ and
  $k\in B$. Let $\overline{\cC}_{\lambda, B}(N, I, J)$ be the set of corresponding equivalence
  classes. We shall show that, for any fixed subset $B$ of $\{1, \ldots, 3g_S-3+n_S\}$,
  $$\# \overline{\cC}_{\lambda, B}(N, I, J)
  \leq \Big( 3(I+1) (J+1) \Big)^{N(3g_S-3 +n_S-\# B)}.$$ The upper bound \eqref{e:boundCNIJ} is then
  obtained by summing over all possible subsets $B$.

  Let us fix a subset $B$ of $\{1, \ldots, 3g_S-3 +n_S\}$ and describe a ``normal form'' for a
  representative of the equivalence class of any
  $(\gamma_1, \ldots, \gamma_N)\in \cC_{\lambda, B}(N, I, J)$.

  Let $Q_1, \ldots, Q_{2g_S-2+n_S}$ be the connected components of $S\setminus \lambda$.  Fix
  $j\in \{1, \ldots, N\}$.  It is classical that the data of the family $(m^j(k))_{k\not\in B}$
  determines exactly one topology for each restriction $\gamma_j\cap Q_m$, where
  $m \in \{1,\ldots, 2g_S-2+n_S\}$.  To exploit this fact, we follow quite closely the discussion of
  \cite[Expos\'e 4, \S III]{FLPf} but introduce some small variants, suited to our counting problem.

  Let $P^2$ be a fixed pair of pants with boundary components numbered $B_1, B_2, B_3$, For each
  triple of non-negative integers $(M_1, M_2, M_3)$ such that $M_1+M_2+M_3$ is even, there exists a
  model of multiple arc in $P^2$ having $M_1, M_2, M_3$ intersections with each of the three
  boundary components.  The list of models is given in \cite[Expos\'e 4, \S III, Figure 3]{FLPf}.

  We denote by $\lambda_k\times [-1, +1]$ some tubular neighbourhoods of each $\lambda_k$, chosen to
  be pairwise disjoint in $S$. The complement of their union is made of disjoint pair of pants
  $R_1, \ldots, R_{2g_S-2+n_S}$, with $R_m\subset Q_m$. Let us fix once and for all a homeomorphism
  $\phi_m$ between each $R_m$ and the reference pair of pants $P^2$, thus obtaining a collection of
  models of multiple arcs in $R_m$.

  The data of the family $(m^j(k))_{k\not\in B}$ determines a model of multiple arc in each $R_m$,
  such that $\gamma_j\cap R_m$ is isotopic to the model.

  There exists a curve $\gamma'_j$ isotopic to $\gamma_j$, that coincides with the model in each
  $R_m$ (furthermore, by \cite[Expos\'e 4, Lemme 5]{FLPf}, any two curves $\gamma_j'$, $\gamma''_j$
  satisfying this requirement are isotopic in each cylinder $\lambda_k\times [-1, +1]$, for an
  isotopy fixing the boundary $\lambda_k\times \{-1, +1\}$). The ``normal form'' $\gamma'_j$ is
  fully determined by the data of $(m^j(k))_{k\not\in B}$ and of the isotopy class of {the
    restrictions} ${\gamma'_j}\restr _{\lambda_k\times [-1, +1]}$ for all $k\not\in B$.

  Let {$S^j_k, T^j_k$} be two distinct transversals to this cylinder, joining
  $\lambda_k\times \{-1\}$ and $\lambda_k \times \{+1\}$, sharing the same endpoints and
  intersecting only at those endpoints. The isotopy class of
  ${\gamma_j}\restr _{\lambda_k\times [-1, +1]}$ is fully parametrized by {the integers}
  $s^j_k$ and $t^j_k$, defined as the geometric intersection of
  ${\gamma_j}\restr _{\lambda_k\times [-1, +1]}$ respectively with {$S^j_k$ and $T^j_k$}.
  Besides, we have either $m^j(k)=s^j_k+t^j_k$, or $s^j_k=m^j(k)+t^j_k$ or $t^j_k=m^j(k)+s^j_k$, see
  \cite[Expos\'e 4, \S III, Figure 5]{FLPf}.

  The following observation is crucial: applying a Dehn twist to the transversal {$S^j_k$}
  while keeping the parameters $s^j_k$ constant amounts to applying the same Dehn twist on
  $\gamma_j$. Since we are only counting equivalence classes, it is enough to show that there exists
  a choice of transversal {$S^j_k$} for which we can bound all the $s^j_k$ by the integer $J$.

  Given $k\not\in B$, we choose (for instance) the smallest $j_o\in \{1, \ldots, N\}$ such that
  $\gamma_{j_o}$ intersects $\lambda_k$, and we choose {$S^j_k$} to be a branch of
  ${\gamma'_{j_o}}\restr_{\lambda_k\times [-1, +1]}$.  Our assumptions on the intersection numbers
  imply that $0\leq s^j_k\leq J$ and $0\leq m^j(k)\leq I$ for $k\not\in B$. As a consequence, for
  each $1 \leq j \leq N$ and $k \notin B$, there are at most $J+1$ possible values that the integer
  $s_k^j$ can take, and at most $I+1$ values that the integer $m^j(k)$ can take. Taking into account
  the three situations $m^j(k)=s^j_k+t^j_k$ or $s^j_k=m^j(k)+t^j_k$ or $t^j_k=m^j(k)+s^j_k$, we
  obtain the claimed upper bound.

\end{proof}

\subsection{Sequence of volutes associated with a non-simple loop}
\label{s:voluteconst}
Let $\gamma$ be a non-simple (oriented) loop in minimal position, contained in a surface $S$. The following construction is inspired by a discussion from Luo--Tan \cite[\S 1.2]{luotan2014}, and relies on the fact that the complement of~$\gamma$ does not contain any bigon.


\subsubsection{Construction of one volute}

Let $x_0$ be a point on $\gamma$, assumed not to be a self-intersection point of $\gamma$, and choose a parametrization $\gamma:\R\To S$ such that $\gamma(0)=x_0$. Consider the path $A^-_t=\gamma([-t, 0])$ for $t>0$. Let $t_->0$ be the smallest positive number such that $A_{t_-}$ is not a simple arc (say, $\gamma(-t_-)=\gamma(t')$ with $t'\in (-t_-, 0]$).
Next, let $t_+>0$ be the smallest positive time so that $\gamma(t_+)\in \gamma([-t_-, t_+))$ (say,
$\gamma(t_+)=\gamma(t'')$ with $t''\in [-t_-, t_+)$).


\begin{defa}
 We call $G(x_0):=\gamma([-t_-, t_+])$ the \emph{volute} associated with $x_0$.
 \end{defa}

 {The volute $G(x_0)$ is a graph of Euler characteristic $-1$. If we call $\Sigma(x_0)$ a regular
 neighbourhood of the volute, then it is either a three holed sphere or a once-holed torus. } The
graph $G(x_0)$ may be decomposed into two rooted simple oriented loops
$g_1(x_0)= \gamma_{\restr [-t_-, t']}$ {and} $g_2(x_0)= \gamma_{\restr [t'', t_+]}$ (with
respective roots $\gamma(-t_-)$ and $\gamma(t_+)$), and a simple path $b(x_0)= \gamma([t', t''])$
joining the root of $g_1(x_0)$ to the root of $g_2(x_0)$.

\begin{figure}[h!]
  \centering
  \begin{subfigure}[b]{0.2\textwidth}
    \centering
    \includegraphics[scale=0.65]{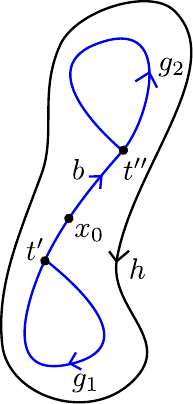}
    \caption{$g_1bg_2^{-1}b^{-1}$.}
    \label{fig:fig_a}
  \end{subfigure}%
  \begin{subfigure}[b]{0.2\textwidth}
    \centering
    \includegraphics[scale=0.65]{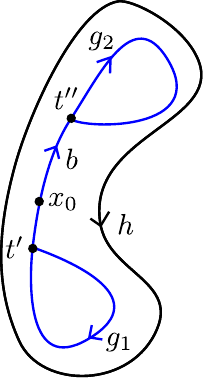}
    \caption{$g_1bg_2b^{-1}$.}
    \label{fig:fig_b}
  \end{subfigure}%
  \begin{subfigure}[b]{0.25\textwidth}
    \centering
    \includegraphics[scale=0.65]{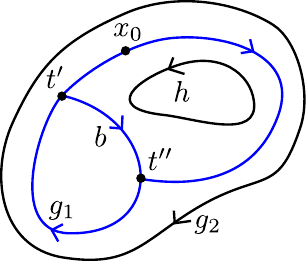}
    \caption{$g_1bg_2^{-1}b^{-1}$.}
    \label{fig:fig_c}
  \end{subfigure}%
  \begin{subfigure}[b]{0.35\textwidth}
    \centering
    \includegraphics[scale=0.65]{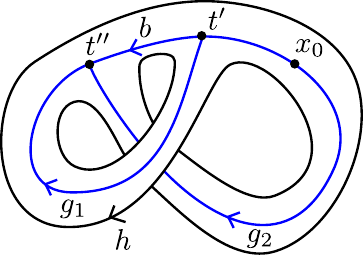}
    \caption{$g_1bg_2^{-1}b^{-1}g_1^{-1}bg_2b^{-1}$.}
    \label{fig:fig_d}
  \end{subfigure}%
    \caption{Four cases for the volute $G(x_0)$ and the expression for $h$ in each case.}
    \label{f:volutes}
\end{figure}

In Figure \ref{f:volutes}, {we represented all possible volutes, after assuming for symmetry
  reasons} that at time $t'$ the curve $\gamma$ crosses itself from the right. The picture shows the
four situations that arise, depending on whether $t'\leq t''$ or $t''>t'$, and whether at time $t''$
the curve $\gamma$ crosses itself from the right or from the left. In cases (a), (b), (c),
$\Sigma(x_0)$ is a pair of pants, its boundary is made of two disjoint simple curves
$\gamma_1(x_0), \gamma_2(x_0)$ homotopic respectively to $g_1(x_0)$, $g_2(x_0)$, and of a third
simple curve $h(x_0)$, homotopic to
\begin{align}\label{e:h'}\hbis(x_0)=g_1(x_0) \smallbullet b(x_0)\smallbullet g_2(x_0)^{\pm
    1}\smallbullet b(x_0)^{-1},
\end{align}
where $\smallbullet$
means the concatenation of paths. In case (d), $\Sigma(x_0)$ is a once-holed torus: the boundary of
$\Sigma(x_0)$ is made of only one curve $h(x_0)$ homotopic to
$$g_1(x_0) \smallbullet b(x_0)\smallbullet g_2(x_0)^{-1}\smallbullet b(x_0)^{-1} g_1(x_0)^{-1}
\smallbullet b(x_0)\smallbullet g_2(x_0) \smallbullet b(x_0)^{-1}.$$ The surface $\Sigma(x_0)$ is
filled by two simple curves $\gamma_1(x_0), \gamma_2(x_0)$ intersecting once, homotopic respectively
to $g_1(x_0), g_2(x_0)$.  To fix ideas we can decide to put their intersection point at
$\gamma(t')$.

\subsubsection{Sequence of volutes}
 
We now take $\eps>0$ very small, so that $\gamma([t'', t_++\eps|)$ still intersects exactly once, at $\gamma(t_+)$. Take as new origin $x_1:=\gamma(t_++\eps)$ and repeat the same construction with $x_0$ replaced by $x_1$. We obtain a volute $G(x_1)$ formed of $g_1(x_1), g_2(x_1), b(x_1)$, and a surface $\Sigma(x_1)$ of Euler characteristic $-1$ containing three simple curves $(\gamma_1(x_1), \gamma_2(x_1), h(x_1))$.
If $\Sigma(x_1)$ is a pair of pants then $(\gamma_1(x_1), \gamma_2(x_1), h(x_1))$ are disjoint simple curves bordering $\Sigma(x_1)$. If $\Sigma(x_1)$ is a once-holed torus then $h(x_1)$ is homotopic to the boundary, and $\gamma_1(x_1), \gamma_2(x_1)$ are simple curves intersecting once and filling $\Sigma(x_1)$.
Remark that the rooted oriented loops $g_2(x_0)$ and $g_1(x_1)$ coincide, so we can impose $\gamma_2(x_0)=\gamma_1(x_1)$.

Repeating this construction, we obtain a sequence of volutes $(G(x_n))_{{n \geq 0}}$,
{satisfying} $g_2(x_n)=g_1(x_{n+1})$, {together with} a surface $\Sigma(x_n)$ of Euler characteristic
$-1$ containing three simple curves $(\gamma_1(x_n), \gamma_2(x_n), h(x_n))$, with
$\gamma_2(x_n)=\gamma_1(x_{n+1})$.
Since $\gamma$ is a loop, there is $N$ such that $G(x_N)=G(x_0)$ and we stop the construction. 

Keep in mind that $h(x_n)$ is homotopic to $\hbis(x_n)$, formed like in \eqref{e:h'} by a
concatenation of $g_1(x_n)$, $g_2(x_n)$ and copies of $b(x_n)$. As a consequence of the
construction, for any Riemannian metric on $S$, we have the following length inequalities.
\begin{lem}
  For any $n \geq 0$, 
  \begin{align}\label{e:boundGn} \ell(G(x_n)) \geq \max \left( \ell(g_1(x_n)), \ell(g_2(x_n)),
    \frac{
    \ell(\hbis(x_n)) }2 \right).
  \end{align}
  Furthermore,
  \begin{align}\label{e:bound_volutes}\ell(\gamma)\leq \sum_{n=0}^{N-1} \ell(G(x_n))\leq 3
    \ell(\gamma).
  \end{align}
\end{lem}

We now introduce more compact notations.

\begin{nota}
  For an index $n \in \Z \diagup N\Z$, we write $g_n=\gamma_2(x_n)=\gamma_1(x_{n+1})$, $h_n= h(x_n)$
  and $\Sigma_n=\Sigma(x_n)$.
\end{nota}

If $\Sigma_n$ form a pair of pants, {then} $(g_{n-1}, g_n, h_n)$ are its boundary
components. If $\Sigma_n$ is a once-holed torus, {then} $h_n$ is its boundary and
$g_{n-1}$, $g_n$ are two simple curves, disjoint from $h_n$, intersecting once and filling $\Sigma_n$.

We take the collection of curves $((g_{n-1}, g_n, h_n))_{n\in \Z \diagup N\Z}$ in minimal position.
Remark that if both $\Sigma_n$ and $\Sigma_{n+1}$ are once-holed tori, there are constraints on the relative position of the curves $g_{n-1}, g_n$ and
$g_{n+1}$ (for instance, $g_{n+1}$ must intersect $g_{n-1}$). However, since we just need to find upper bounds on the number of
topological types, it will not be necessary to determine all conditions fulfilled by the curves
$g_n$.

Remark that our construction is purely topological, in the sense that is does not depend on the choice of a metric, but only on the choice
of a representative of the homotopy class of $\gamma$.
 
\subsubsection{The segments $I_n$ and Condition (V)}

{Let us introduce a family of segments $I_n = [O_n,T_n]$ indexed by
  $n \in \Z \diagup N \Z$. For an index $n$, we pick the segment $I_n$ to be:
\begin{itemize}
\item a segment going from $g_{n-1}$ to $ g_{n}$ inside $\Sigma_{n}$ if $\Sigma_n$ is a pair of
  pants;
\item the segment reduced to a point, the intersection point between $g_{n-1}$ and $g_{n}$, if
  $\Sigma_n$ is a once-holed torus.
\end{itemize}
We note that the endpoints $T_{n}$ and $O_{n+1}$ belong in $g_{n}$ for any index $n$. The point
$O_{n+1}$ will serve as base-point of $g_n$.
}

If $\Sigma_n$ and $\Sigma_{n+1}$ are pairs of pants obtained by the construction above, they must satisfy the following condition,
referred below as Condition (V): $I_n$ and $I_{n+1}$ both lie on the same side of $g_n$.
This condition depends on the pairs of pants $\Sigma_n$ and $\Sigma_{n+1}$ and on the choice of labelling of their boundary components, but not on the choice of $I_n$ and $I_{n+1}$, provided $I_n$ goes from 
$g_{n-1}$ to $ g_{n}$ and  $I_{n+1}$ goes from 
$g_{n}$ to $ g_{n+1}$. It
mostly serves to give a meaning to the forthcoming discussion.

Note that the prescriptions above only define $I_{n+1}$ is modulo homotopy with gliding endpoints
along $g_{n}, g_{n+1}$, so we can impose further conditions.  
\begin{enumerate}
\item Assume first that both $\Sigma_n$ and $\Sigma_{n+1}$ are pairs of pants.
  \begin{itemize}
  \item If $I_{n+1}$ intersects $h_n$, we can impose that $O_{n+1}\not= T_n$ (by gliding $O_{n+1}$
    along $g_n$) and that $I_n$ and $I_{n+1}$ are in minimal position (i.e. have minimal intersection number in a gliding homotopy class).
  \item If $I_{n+1}$ intersects $g_{n-1}$, we impose that $O_{n+1}=T_n$ and {that the open
      segment} $(O_{n+1}, T_{n+1})$ contains $(T_n, O_n)$. Note that in this case we can have $T_{n+1}=O_n$; this happens if $g_{n+1}=g_{n-1}$ and $h_{n+1}=h_n$ (which implies, in
  particular, that $\Sigma_{n+1}=\Sigma_{n}$).  
  \end{itemize}
  \item If $\Sigma_n$ is a pair of pants and $\Sigma_{n+1}$ a once-holed torus, then we impose that
  $I_n$ is in minimal position with $g_{n+1}$.
\item If $\Sigma_n$ is a once-holed torus and $\Sigma_{n+1}$ a pair of pants, then we impose that
  $g_n$ is in minimal position with $I_{n+1}$.
\end{enumerate}

We let $c_n$ be the smallest subpath of $g_n$, oriented in the direction of $g_n$, and going from $T_n$ to $O_{n+1}$ (in case $T_n\not =O_{n+1}$, this is a strict subsegment of $g_n$; in case $T_n=O_{n+1}$,
it is a trivial subpath). Call 
\begin{align}\label{e:Gn} p_n= g_{n-1}\smallbullet {I_n} \smallbullet c_n  
  ,\end{align} a continuous path going from $O_{n-1}$ to $O_n$. Then {$(p_{n})_{n \in \Z
    \diagup N \Z}$ is compatible in the following sense.} 

\begin{rem} A finite sequence of continuous paths $p_n: [0, t_n]\To S$ (indexed by
  $n \in \Z \diagup N \Z$) is said to be compatible if $p_n(t_n)=p_{n+1}(0)$.  We define the
  homotopy relation between two sequences of $N$ compatible paths $(p_{n})_{n \in \Z \diagup N \Z}$
  and $(q_{n})_{n \in \Z \diagup N \Z}$ as the usual homotopy relation between $p_n$ and $q_n$ for
  each $n \in \Z \diagup N \Z$, with the additional condition that the deformation must preserve the
  compatibility condition.
 \end{rem}

 Coming back to \eqref{e:Gn}, there exists integers $k_n \geq 0$, $n\in \Z \diagup N\Z$, such that
 the loop $\gamma$ is freely homotopic to
\begin{align}\label{e:desc} \smallbullet_{n=0}^{N-1} (p_n \smallbullet g_n^{k_n}).
\end{align}
If both $\Sigma_n$ and $\Sigma_{n+1}$ are once-holed tori then $k_n=0$. If $\Sigma_n$ or
$\Sigma_{n+1}$ is a pair of pants then any loop homotopic to $\gamma$ contains at least $k_n-1$
disjoint copies of a closed curve homotopic to $g_n$.

 \subsection{A surjective map}

 Below we describe a surjective map, from a set $\cP$ onto the set of free homotopy classes of
 non-simple loops in $S$. We later identify a subset $\cP^{\inj,\tfL, L}\subset \cP$ that is mapped
 onto a set of representatives of $ \mathrm{Loc}_{S}^{\inj,\tfL, L}$. Our upper bound on
 $ \#\mathrm{Loc}_{S}^{\inj,\tfL, L}$ will be obtained by evaluating the cardinality of
 $\cP^{\inj,\tfL, L}$, modulo homeomorphisms and homotopy.

 {
 \begin{nota}
   For any integer $N\geq 1$, denote by {$\cP_N$} the set of sequences
   $(P_n)_{n\in \Z/ N\Z}$, where:
   \begin{itemize}
   \item $P_n=(g_n, h_n)$ is a collection of two simple curves in $S$, with $g_n$ oriented;
   \item for every $n \in \Z \diagup N \Z$, we have one of the following:
     \begin{enumerate}
     \item either $(g_{n-1}, g_n, h_n)$ are disjoint and border a pair of pants $\Sigma_n$;
     \item or $g_{n-1}, g_n$ intersect once and fill a once-holed torus $\Sigma_n$ with
       boundary~$h_n$;
     \end{enumerate}
   \item if $\Sigma_n$ and
 $\Sigma_{n+1}$ are pairs of pants, we further impose that they satisfy Condition~(V).   \end{itemize}
 \end{nota}

 We then group together all possible values of $N$ as well as the data of the integers
 $(k_n)_{n \in \Z \diagup N \Z}$ in a big set $\cP$.

\begin{nota}
  Define
$$\cP=\bigsqcup_{N=1}^{+\infty} \cP_N \times \N_0^{ \Z/ N\Z}.$$
\end{nota}}

Suppose we have an integer $N\geq 1$, and an element $(P_n )_{n\in \Z/ N\Z}$ of $\cP_N$. 
Let us consider a sequence of segments $I_n=[O_n, T_n]$ satisfying the prescriptions of the previous
section. The collection of compatible paths $(p_n)_{n\in \Z/ N\Z}$ is completely determined by the
data of the sequence $(g_{n-1}, g_n, h_n)_{n\in \Z/ N\Z}$,
$I_n=[O_n, T_n]$, {given that each $g_n$ comes with an orientation}. The data of a family of positive
integers $(k_n)_{n\in \Z/ N\Z}$ then defines a loop $\gamma$ via \eqref{e:desc}.
Note that if we just give {$(P_n )_{n\in \Z/ N\Z}$} and $(k_n)_{n\in \Z/ N\Z}$, there is some freedom
in the choice of $I_n=[O_n, T_n]$, but different choices will give loops $\gamma$ in the same
homotopy class.
 
 What we have just described is a map
\begin{align*}
\Phi : \cP \To \pi^*_1(S)
\end{align*}
onto the set of free homotopy classes of loops on $S$, which can then be composed by the projection
$\pi : \pi^*_1(S) \To {\mathrm{Top}}(S)$, the set of local topological types of loops in $S$.
 
If $\gamma$ is a non-simple loop, and if
$P=(P_n, k_n)_{n \in \Z \diagup N \Z}\in {\cP}$ is the sequence
associated to $\gamma$ via the construction of \S \ref{s:voluteconst}, then $\Phi(P)$ is homotopic
to $\gamma$. This shows that the map $\Phi$ is surjective. This is even true if we impose the
restriction $k_n=0$ if $\Sigma_n$ and $\Sigma_{n+1}$ are once-holed tori.

It it clear that an isotopy of the collection $(P_n)_{n\in  \Z/ N\Z}$ will result in an isotopy of the collection $(p_n)_{n\in \Z/ N\Z}$, and thus that the map $\Phi$ passes to the quotient of
$\cP$ by the {\emph{isotopy}} equivalence relation. However, it is much easier to count {\emph{homotopy}} classes of families $(P_n)_{n\in  \Z/ N\Z}$.
By Graaf and Schrijver \cite{graaf1997}, the isotopy equivalence relation is the same as the homotopy equivalence relation, modulo finite sequences of third Reidemeister moves. As a consequence, it remains true that the {homotopy} class of the collection $(P_n)_{n\in  \Z/ N\Z}$ determines the homotopy class of the collection $(p_n)_{n\in \Z/ N\Z}$. 

{
Hence, let us define $\tilde\cP$ to be the quotient of $\cP$ by the equivalence relation $  (P_n,
k_n)_{n\in  \Z/ N\Z} \sim (P'_n, k'_n)_{n\in \Z/N'\Z}$ if and only if
\begin{align}\label{e:req}
  \begin{cases}
    N=N' \\
    \exists j \in   \Z/ N\Z: \forall n, k_n= k'_{n+j} \\
    \exists \phi:S\To S \text{ homeomorphism} :
    \phi((P_n)_{n\in  \Z/ N\Z}) \text{ homotopic to }(P'_{n+j})_{n\in  \Z/ N\Z}
  \end{cases}
\end{align}}

We have shown that the map
\begin{align*}
\tilde\Phi : \tilde\cP \To {\mathrm{Top}}(S)
\end{align*}
is well defined and surjective. 


\subsection{Filling and pairs of pants decomposition}

Before moving to the proof of Theorem \ref{t:TF_curves}, we clarify a fact that will help us count topological types. 

\begin{lem} \label{e:help_decompo} Let $S$ be a filling type of signature $(g_S, n_S)$ with
  $2g_S-2+n_S>0$. Let {$c=(c_i)_{1 \leq i \leq N}$ be a (possibly infinite)} collection of
  non-contractible loops in minimal position, which fills $S$. Then there exists a decomposition of
  $S= Q_1\cup Q_2\cup \ldots Q_{2g_S -2+ n_S} $ into surfaces of Euler characteristic $-1$ with
  disjoint interiors, and with the following property: if $Z$ is a bordered hyperbolic surface, if
  $\phi :S \To Z$ is a homeomorphism such that $\sum_{i=1}^N \ell_Z(\phi(c_i)) \leq L$, then
\begin{itemize}
\item $\ell(\partial Z)\leq 2L$;
\item $\ell_Z(\partial  (\phi (Q_m)))\leq 2L$;
\item $\ell_Z(\partial (\phi (S_m))) \leq 2L$ if $S_m:= Q_1\cup\ldots\cup Q_m$.
\end{itemize}
\end{lem}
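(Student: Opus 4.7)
The plan is to exploit the cell structure that $c$ induces on $S$. The filling hypothesis says that $S\setminus c$ is a disjoint union of topological open disks (in the interior) and open annular neighborhoods of the components of $\partial S$. Hence $c\cup\partial S$ is the $1$-skeleton of a CW decomposition of $S$; any closed curve in the interior of $S$ is freely homotopic to a cycle of arcs in $c$, and the topological boundary of any subsurface that is a union of $2$-cells of this decomposition consists of arcs of $c$ together with arcs of $\partial S$.

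Building on this, I would construct a pants decomposition $\alpha_1,\dots,\alpha_{3g_S-3+n_S}$ of $S$ such that every $\alpha_k$ is freely homotopic to a closed concatenation of arcs of $c$. This is done by induction on $|\chi(S)|$: given $S$ with $|\chi(S)|>1$, the connectedness and non-triviality of the dual graph of the cell decomposition yield an embedded simple closed curve supported on arcs of $c$ that separates off a subsurface of absolute Euler characteristic $1$ (a pair-of-pants or once-holed torus). Iterating produces the pants $Q_1,\dots,Q_{2g_S-2+n_S}$, and I would order them (e.g.\ by a breadth-first traversal of the pants adjacency graph) so that each partial union $S_m$ remains connected.

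The length bounds then rest on a single two-sided counting principle: each arc of $c$ has exactly two sides in $S$, so it appears in the topological boundary of any cellular subsurface at most twice. For $\ell(\partial Z)$: each boundary component $\beta$ of $Z$ is the geodesic representative of $\phi(\gamma_\beta)$, where $\gamma_\beta$ is the inner boundary (in $S$) of the annular component of $S\setminus c$ surrounding $\phi^{-1}(\beta)$, and $\gamma_\beta$ is a closed concatenation of arcs of $c$. Since each arc of $c$ belongs to at most two of the $\gamma_\beta$'s (at most once per side that happens to be annular), one obtains
\begin{equation*}
\ell(\partial Z)\;\le\;\sum_\beta \ell_Z(\phi(\gamma_\beta))\;\le\;2\sum_{i=1}^N\ell_Z(\phi(c_i))\;\le\;2L.
\end{equation*}
The same accounting applies to $\partial(\phi(Q_m))$ and $\partial(\phi(S_m))$: each of their boundary components is geodesic-length-minimized in its free homotopy class, which is represented by arcs of $c$ (or by a boundary component of $Z$), and the two-sided count bounds the total by $2L$.

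The main obstacle is the construction of the pants decomposition in which the curves $\alpha_k$ genuinely live on $c$. Since $c$ may have an intricate self-intersection pattern, producing simple, pairwise disjoint, homotopically non-trivial and pairwise non-homotopic curves made of arcs of $c$ requires careful surgery on the dual graph of the cell decomposition; this combinatorial work is where the bulk of the argument lies, the length estimates reducing afterwards to bookkeeping.
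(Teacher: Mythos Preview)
Your argument for $\ell(\partial Z)\le 2L$ is fine and matches what the paper does. The gap is in the extension to $\partial Q_m$ and $\partial S_m$. Your two-sided counting principle is valid only when the subsurface in question is \emph{cellular}, i.e.\ a union of closed $2$-cells of the decomposition induced by $c$; only then is its topological boundary literally a cycle in the $1$-skeleton so that each arc is traversed at most twice. But a cellular pants decomposition need not exist. Take a closed surface $S$ of genus $g\ge 2$ filled by a single loop $c$ whose complement is a \emph{single} open disk (such filling curves exist; they have $2g-1$ self-intersections). Then the only cellular subsurfaces are $\emptyset$ and $S$, so there is no cellular $S_1$ with $\chi(S_1)=-1$, and your inductive construction cannot start. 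If instead you relax to ``each $\alpha_k$ is merely freely homotopic to some cycle in the $1$-skeleton'', the homotoping cycle may traverse arcs with arbitrary multiplicity, and nothing forces the three boundary cycles of a single $Q_m$ to share arcs in a way that keeps their total below $2L$; you would get at best $6L$, and in general no uniform bound.

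The paper sidesteps this by never asking $S_m$ to be cellular. It picks an increasing family of subarcs $p_1,\dots,p_{m+1}$ of $c$ (meeting one another only at transverse intersection points of $c$) and defines $S_m$ as the subsurface \emph{filled} by $p_1\cup\dots\cup p_{m+1}$, choosing each new $p_{m+1}$ so that the absolute Euler characteristic goes up by exactly one. Because the $p_j$ are arcwise-disjoint subsegments of $c$, their total length is at most $L$; then the same lemma you used for $\partial Z$ (``boundary length of a filled subsurface is at most twice the length of what fills it'') gives $\ell_Z(\partial\phi(S_m))\le 2L$ and similarly for $Q_m$. The extra flexibility of \emph{filled} subsurfaces over cellular ones is exactly what makes the induction go through when the cell decomposition is too coarse.
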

Later on, we shall write $Q(c)=(Q_1(c), \ldots, Q_{2g_S -2+ n_S}(c))$ if we want to refer explicitly
to the decomposition into pairs of pants associated to $c$ via Lemma \ref{e:help_decompo}.
The multi-curve cutting $S$ into $Q_1\cup Q_2\cup \ldots Q_{2g_S -2+ n_S}$ will be denoted by $\lambda(c)=(\lambda_1(c),\ldots, \lambda_{3g_S -3+ n_S}(c))$.

\begin{rem}  \label{e:help_decompo3}(Variants of Lemma \ref{e:help_decompo}) If instead of assuming that  $\sum_{i=1}^N \ell_Z(\phi(c_i)) \leq L$ we just assume that $\ell_Z(\phi(c_i))\leq L$ for all $i$, the proof gives $\ell_Z(\partial  (\phi (Q_m)))\leq 2(m+1)L$ and $\ell_Z(\partial (\phi (S_m)))\leq 2(m+1)L$.

If the $c_i$, instead of being closed curves, are either closed curves with $\ell_Z(\phi(c_i))\leq L$ or pairs of pants with $\ell_Z^{\max}(\partial \phi(c_i))) \leq \tfL$, then an adaptation 
of the lemma gives 
$\ell_Z(\partial  (\phi (Q_m)))\leq (m+1) \max(2L, 3\tfL)$ and $\ell_Z(\partial (\phi (S_m)))\leq (m+1) \max(2L, 3\tfL)$.
\end{rem}

\begin{proof} The first item is standard, see \cite[Lemma 4.4]{Ours1}. 

To simplify the discussion, we endow $S$ with an auxiliary Riemannian metric $Y$ and assume that the $c_i$ are geodesics. We construct $S_m$ and $Q_m$ inductively, they are defined to have geodesic boundary for the auxiliary metric $Y$.

In our construction, $S_m$ will be filled by a union $p_1\cup\ldots \cup p_m\cup p_{m+1}$, where each $p_j$ is a closed subsegment of $c$, joining two intersection points of $c$. The $p_j$ only intersect transversally; the origin of $p_{m+1}$ lies on $p_1\cup\ldots \cup p_m$, and its endpoint lies either in $p_1\cup\ldots \cup p_m$, or in $\overset{\circ}{p}_{m+1}$ (the segment $p_{m+1}$ deprived of its boundary points).

$Q_1$ is constructed as follows: if one of the loops $c_i$ is non-simple, we take $p_1$ to be a volute of $c_i$ and $p_2=\emptyset$. Then $Q_1=S_1$ is the surface filled by $p_1$.
If all the loops $c_i$ are simple, then there must exists $i<j$ such that $c_i\cap c_j\not=\emptyset$. We take $p_1=c_i$ and $p_2$ a sub-arc arc of $c_j$ beginning and ending on $c_i$. Then $Q_1=S_1$ is the surface filled by $p_1\cup p_2$.

Suppose we have constructed $p_1,\ldots , p_m$, $Q_1, \ldots, Q_{m-1}$, with $S_{m-1}=Q_1\cup\ldots\cup Q_{m-1}$ filled by $p_1\cup\ldots \cup p_m$.

If $S_{m-1}=S$, there is nothing to be done. Otherwise, there is a (geodesic) subarc $p$ of $c$ leaving from $\partial S_{m-1}$ and returning to it, contained in $S\setminus S_{m-1}$.

The surface
filled by $p_1\cup\ldots \cup p_m\cup p$ is strictly larger than $S_{m-1}$. If $p$ is simple, it intersects either one or two boundary components of $\partial S_{m-1}$. Then $p\cap (S\setminus S_{m-1})$,
together with those boundary components, fill a surface $Q_{m+1}$, of Euler characteristic $-1$. The full geodesic containing $p$ must enter and exit $S_{m-1}$. Hence, it must intersect the already constructed $p_1\cup\ldots \cup p_m$ (otherwise, $p_1\cup\ldots \cup p_m$ would not fill $S_{m-1}$). Thus, there is a geodesic arc $p_{m+1}$ containing $p$, ending and beginning at $p_1\cup\ldots \cup p_m$, such that $p_{m+1}\setminus p$ is fully contained in $S_{m-1}$.

If $p$ is not simple, then parametrize $p$ by $p: [0, T]\To S\setminus S_{m-1}$, such that $p(0)\in \partial S_{m-1}$. Take the smallest $t\leq T$ such that $p(t)\in p([0, t))$. We define $Q_{m+1}$ to be the surface filled by $p([0, T])$ and the component of $\partial S_{m-1}$ containing $p(0)$. We extend $p$ to $p_{m+1}$ as previously.

Now, let $Z$ be a bordered hyperbolic surface, and let $\phi :S \To Z$ is a homeomorphism. If $\gamma=(\gamma_1, \ldots, \gamma_N)$ are the geodesic representatives of $(\phi(c_1), \ldots, \phi(c_{N}))$ for the metric $Z$, then $(\gamma_1, \ldots, \gamma_N)$ are obtained from $(\phi(c_1), \ldots, \phi(c_{N}))$ by an isotopy followed by a finite sequence of third Reidemeister moves (Graaf and Schrijver \cite{graaf1997}). This operation maps each subarc $\phi(p_j)$ to a subarc $q_j$ of $\gamma$, such that $q_1\cup\ldots \cup q_{m}$ fills a surface isotopic to $\phi(S_{m-1})=\phi(Q_1)\cup\ldots\cup \phi(Q_{m-1})$. By construction, for $i\not=j$, $q_i\cap q_j$ consists of isolated points, so $\ell(q_1\cup\ldots \cup q_{m})=\sum_{j=1}^m \ell(q_j) \leq \ell(\gamma)$.

The assumption $\ell_Z(c)\leq L$ means that $\ell(\gamma)\leq L$, so that $\ell(q_1\cup\ldots \cup q_{m})\leq \ell(\gamma)\leq L$. By \cite[Lemma 4.4]{Ours1}, we can say that $\ell_Z(\partial (\phi( S_{m-1})))\leq 2L$. Even more precisely,
 $\ell_Z(\partial (\phi( S_{m-1})))\leq 2\ell(q_1\cup\ldots \cup q_{m})$ and $\ell(\partial Q_{m+1})\leq \ell_Z(\partial (\phi (S_m))) + 2\ell(q_{m+1})\leq  2\ell(q_1\cup\ldots \cup q_{m+1}) \leq 2L$. This ends the proof.
\end{proof}

\subsection{Proof of Theorem \ref{t:TF_curves}}We now describe a subset of $\tilde\cP$ which projects onto $\mathrm{Loc}_{S}^{\inj,\tfL, L}$, and evaluate its cardinality. Because of the structure of the equivalence relation \eqref{e:req}, we can write
$\tilde\cP=\bigsqcup_{N=1}^{+\infty}\tilde \cP_N$, where $\tilde \cP_N$ is the set of equivalence classes restricted to  $\cP_N \times \Z^{ \Z/ N\Z}$.

 Our construction starts with the following three observations.
 
  \begin{lem}\label{l:int1}
    If $S$ is endowed with a $(\inj, \tfL)$ tangle-free hyperbolic metric $Z$, and if $\gamma$ is a
    loop in $S$ such that $\ell_Z(\gamma)\leq L$, then
    $$i(\gamma, \gamma)\leq \frac{4}{\inj^2} L^2.$$
\end{lem}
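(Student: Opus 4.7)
The plan is to convert the $(\kappa,\cL)$-tangle-free hypothesis into a uniform lower bound on the injectivity radius of $Z$, and then to chop $\gamma$ into short geodesic arcs that pairwise intersect at most once.

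First, I would observe that since $\kappa < 2\argsh(1)$, every closed geodesic of $Z$ of length $\leq \kappa$ is automatically simple (by \cite[Theorem 4.1.6]{buser1992}, already invoked in the excerpt before Theorem \ref{t:moebius}). The tangle-free hypothesis forbids simple closed geodesics of length $\leq \kappa$ in $Z$, so the systole of $Z$ is strictly greater than $\kappa$, and hence the injectivity radius satisfies $\mathrm{inj}_Z(p) \geq \kappa/2$ at every interior point $p$ of $Z$. In particular, every open ball $B(p,\kappa/2) \subset Z$ is isometric to an open hyperbolic disk of radius $\kappa/2$.

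Next, I would replace $\gamma$ by its geodesic representative in the same free homotopy class: this does not increase its length and realizes the minimal geometric self-intersection number $i(\gamma,\gamma)$. If $\ell_Z(\gamma) < \kappa/2$, then $\gamma$ is entirely contained in a single injectivity disk, hence is null-homotopic and $i(\gamma,\gamma) = 0$, which settles the claim. Otherwise, I subdivide $\gamma$ into $N := \lceil 2\ell_Z(\gamma)/\kappa \rceil \leq 2L/\kappa + 1$ consecutive geodesic sub-arcs $\gamma_1, \ldots, \gamma_N$, each of length at most $\kappa/2$.

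Finally, if two distinct sub-arcs $\gamma_i$ and $\gamma_j$ meet at some point $p$, both are geodesic arcs of length $\leq \kappa/2$ passing through $p$ and are therefore contained in the embedded hyperbolic disk $B(p, \kappa/2)$; two distinct geodesic arcs in a hyperbolic disk meet in at most one point, so $\#(\gamma_i \cap \gamma_j) \leq 1$. Each transverse self-intersection of $\gamma$ corresponds to a unique unordered pair $\{i,j\}$ with $i \neq j$, whence
\begin{equation*}
i(\gamma,\gamma) \leq \binom{N}{2} \leq \frac{1}{2}\left(\frac{2L}{\kappa}+1\right)\cdot \frac{2L}{\kappa} = \frac{2L^2}{\kappa^2} + \frac{L}{\kappa} \leq \frac{4L^2}{\kappa^2},
\end{equation*}
where the last inequality uses $L \geq \kappa/2$. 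I do not foresee any serious obstacle; the only point that deserves care is the passage from the absence of short \emph{simple} closed geodesics to the absence of short closed geodesics altogether, which is exactly where the hypothesis $\kappa < 2\argsh(1)$ is used.
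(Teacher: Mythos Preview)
Your proof is correct and follows exactly the approach the paper invokes: the paper simply cites \cite[Expos\'e 4, Lemme 2]{FLPf} and remarks that the constant there is at most $4/\kappa^2$ because tangle-freeness gives a lower bound on the injectivity radius. You have unpacked that FLP argument in full (lower bound $\kappa/2$ on the injectivity radius via $\kappa < 2\argsh(1)$, subdivision into arcs of length $\leq \kappa/2$, and the ``two geodesic arcs in a disk meet at most once'' count), so there is no substantive difference between your route and the paper's.
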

See for instance \cite[Expos\'e 4, Lemme 2]{FLPf}; the constant $C$ given in this proof is smaller than $\frac{4}{\inj^2}$, where $\inj$ is a lower bound on the injectivity radius.

  \begin{lem}\label{l:int2}
    Supposed $S$ is endowed with a $(\inj, \tfL)$ tangle-free hyperbolic metric $Z$, and $\gamma$ is
    a loop filling $S$ such that $\ell_Z(\gamma)\leq L$. {Let}
    $\lambda=(\lambda_1(\gamma), \ldots, \lambda_{3g_S-3 +n_S}(\gamma))$ be the multicurve cutting
    $S$ into surfaces of Euler characteristic $-1$, defined by Lemma \ref{e:help_decompo}. Then
$$i(\gamma, \lambda_j)\leq 8 \frac{L^2}{\inj^2} ,$$
for all $j \in \{1, \ldots, 3g_S-3 +n_S\}$. More generally, for any multicurve $c$,
$$i(c, \lambda_j)\leq 8 \frac{L \ell_Z(c)}{\inj^2}.$$
\end{lem}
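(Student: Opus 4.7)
The plan is to reduce everything to the standard intersection inequality used for Lemma \ref{l:int1} combined with the length control on the pants-decomposition curves provided by Lemma \ref{e:help_decompo}.

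First, I would record the consequence of the tangle-free hypothesis: since $\kappa<2\argsh(1)$ and $Z$ contains no $(\kappa, \cL)$-tangle, there is no closed geodesic of length $\le \kappa$ in $Z$, so the injectivity radius of $Z$ satisfies $\injrad(Z)\geq \kappa/2$. This is precisely the input feeding into the bound \cite[Expos\'e 4, Lemme 2]{FLPf} that was invoked for Lemma \ref{l:int1}: in general, for any two geodesic multicurves $\alpha,\beta$ in minimal position on a hyperbolic surface with injectivity radius at least $r$,
\[
i(\alpha,\beta)\;\leq\;\frac{\ell(\alpha)\,\ell(\beta)}{r^{2}},
\]
which one sees by covering $\alpha$ by arcs of length $r$ and noting that two geodesic segments contained in an embedded disk of radius $r$ intersect at most once. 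With $r=\kappa/2$ this reads $i(\alpha,\beta)\leq (4/\kappa^{2})\,\ell(\alpha)\ell(\beta)$.

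Second, I would bound $\ell_Z(\lambda_j)$. Since $\gamma$ fills $S$ and $\ell_Z(\gamma)\leq L$, Lemma \ref{e:help_decompo} applies with the collection $c=(\gamma)$ and $Z=\phi(S)$: the multicurve $\lambda(\gamma)=(\lambda_1,\ldots,\lambda_{3g_S-3+n_S})$ is obtained as the union of boundaries of the pieces $\phi(Q_m)$, and each boundary $\partial(\phi(Q_m))$ has total length at most $2L$. In particular every individual component $\lambda_j$ satisfies
\[
\ell_Z(\lambda_j)\;\leq\;2L.
\]

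Combining these two facts gives immediately
\[
i(\gamma,\lambda_j)\;\leq\;\frac{4}{\kappa^{2}}\,\ell_Z(\gamma)\,\ell_Z(\lambda_j)\;\leq\;\frac{4}{\kappa^{2}}\cdot L\cdot 2L\;=\;\frac{4}{\kappa^{2}}\,2L^{2},
\]
and, for the more general multicurve $c$,
\[
i(c,\lambda_j)\;\leq\;\frac{4}{\kappa^{2}}\,\ell_Z(c)\,\ell_Z(\lambda_j)\;\leq\;\frac{4}{\kappa^{2}}\,2L\,\ell_Z(c),
\]
which are the two claimed estimates. There is no real obstacle here: the only substantive inputs are the tangle-free lower bound on the injectivity radius and the a priori length bound on the pants-decomposition curves provided by Lemma \ref{e:help_decompo}; once these are in place the conclusion is a direct application of the classical $\ell(\alpha)\ell(\beta)/r^{2}$ intersection bound from \cite{FLPf}.
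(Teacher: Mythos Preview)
Your proof is correct and follows exactly the approach indicated in the paper, which simply says ``This follows again from \cite[Expos\'e 4, Lemme 2]{FLPf}, and the lower bound $\kappa$ on the injectivity radius.'' You have spelled out the two implicit ingredients --- the injectivity-radius bound from tangle-freeness and the length bound $\ell_Z(\lambda_j)\leq 2L$ from Lemma~\ref{e:help_decompo} --- more carefully than the paper does, but the argument is the same.
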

This follows again from \cite[Expos\'e 4, Lemme 2]{FLPf}, and the lower bound $\inj$ on the injectivity radius.
 
 \begin{lem} \label{l:bound_N_k}
Suppose $S$ is endowed with a $(\inj, \tfL)$ tangle-free hyperbolic metric $Z$, and $\gamma$ is a loop on $S$ such that $\ell_Z(\gamma)\leq L$. Let $P=(P_n, k_n)_{n\in  \Z/ N\Z} \in \cP_N\times \N^{\Z/N\Z}$ be the sequence associated to $\gamma$ via the construction of \S \ref{s:voluteconst}. Then
$$ N\leq 6 \frac L{\tfL}$$
and each $k_n$ satisfies 
$$ 0\leq k_n\leq 1+\frac{L}{\inj}.$$
\end{lem}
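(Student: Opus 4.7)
The plan is to exploit the two distinct ingredients of tangle-freeness in a decoupled way: the bound on $N$ comes from the $2$-dimensional tangle condition applied to each volute neighbourhood $\Sigma_n$, while the bound on $k_n$ comes from the $1$-dimensional tangle condition applied to the shared simple curve $g_n$. Both bounds are ultimately length-sum arguments against $\ell_Z(\gamma) \leq L$.

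For the bound on $N$, I would first observe that each $\Sigma_n$ has absolute Euler characteristic $1$, so it is either a pair-of-pants or a once-holed torus. Taking geodesic representatives of its boundary, we obtain a subsurface with geodesic boundary embedded in $Z$. Tangle-freeness of $Z$ together with Definition \ref{def:tangle} forbids this subsurface from being a $(\kappa,\cL)$-tangle, so at least one of its boundary geodesics has $Z$-length strictly larger than $\cL$ (in the once-holed torus case, this is the unique boundary $h_n$). Since each geodesic representative is length-minimizing in its free homotopy class, we have $\ell_Z(g_{n-1}) \leq \ell(g_1(x_n))$, $\ell_Z(g_n) \leq \ell(g_2(x_n))$ and $\ell_Z(h_n) \leq \ell(h'(x_n))$. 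Combining these inequalities with \eqref{e:boundGn} yields a uniform lower bound of the shape $\ell(G(x_n)) \gtrsim \cL$. Summing in $n$ and using the upper bound in \eqref{e:bound_volutes}, namely $\sum_{n=0}^{N-1} \ell(G(x_n)) \leq 3\ell_Z(\gamma) \leq 3L$, gives $N \leq 3 L/\cL$ up to the explicit constant.

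For the bound on $k_n$, I would use the last paragraph of \S\ref{s:voluteconst}: whenever $\Sigma_n$ or $\Sigma_{n+1}$ is a pair-of-pants, every representative of the free homotopy class of $\gamma$ contains at least $k_n-1$ pairwise disjoint simple loops freely homotopic to $g_n$. When $k_n \geq 1$ this forces $g_n$ to be a boundary component of an embedded geodesic pair-of-pants, hence a non-trivial simple closed geodesic of $Z$. Tangle-freeness then gives $\ell_Z(g_n) > \kappa$, since otherwise $g_n$ itself would be a $1$-dimensional $(\kappa,\cL)$-tangle. Because each of the $k_n-1$ disjoint copies is freely homotopic to $g_n$, it has length at least $\ell_Z(g_n) > \kappa$, so that $(k_n-1)\kappa < \ell_Z(\gamma) \leq L$ and thus $k_n \leq 1 + L/\kappa$. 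The remaining case $k_n = 0$ (which necessarily occurs when both $\Sigma_n$ and $\Sigma_{n+1}$ are once-holed tori) is immediate.

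The main obstacle is book-keeping rather than analysis: one must match the abstract loops $g_1(x_n), g_2(x_n), h'(x_n)$ built out of subarcs of $\gamma$ with the geodesic boundary components $g_{n-1}, g_n, h_n$ of the geodesic representative of $\Sigma_n$ inside $Z$, and verify that this representative is genuinely an embedded subsurface to which Definition \ref{def:tangle} applies. Once the geodesic comparisons $\ell_Z(\cdot) \leq \ell(\cdot)$ and the $(\kappa,\cL)$-tangle-free hypothesis are aligned in this way, both bounds follow from \eqref{e:boundGn}, \eqref{e:bound_volutes} and the disjointness of the $k_n-1$ copies.
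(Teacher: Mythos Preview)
Your proposal is correct and follows the same approach as the paper's proof: the bound on $N$ is obtained by applying the tangle-free hypothesis to each $\Sigma_n$ to get $\max(\ell_Z(g_{n-1}),\ell_Z(g_n),\ell_Z(h_n))\geq\cL$, then combining \eqref{e:boundGn} with the upper bound in \eqref{e:bound_volutes}; the bound on $k_n$ uses the $k_n-1$ disjoint copies of $g_n$ together with $\ell_Z(g_n)\geq\kappa$. Your additional remarks on comparing the arcs $g_1(x_n),g_2(x_n),h'(x_n)$ with the geodesic representatives $g_{n-1},g_n,h_n$ make explicit a step the paper leaves implicit, but the argument is otherwise identical.
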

\begin{proof}
  The tangle-free hypothesis implies that, for each $n \in \{1, \ldots, N\}$,
  $$\max(\ell_Z(g_n),\ell_Z(g_{n-1}),\ell_Z(h_n))\geq \tfL.$$
  As a consequence of \eqref{e:boundGn}, the volutes $G(x_0), \ldots, G(x_{N-1})$ have lengths at
  least $\tfL / 2$, and by \eqref{e:bound_volutes},
$$\ell_Z(\gamma)\geq  N\frac{\tfL}6,$$
which is the announced result.

To obtain the upper bound on $k_n$, we recall that $\gamma$ contains at least $k_n-1$ copies of
closed curves homotopic to $g_n$. By the minimizing property of closed geodesics, this implies
$$\ell_Z(\gamma)\geq (k_n-1) \ell_Z(g_n).$$
By the tangle-free condition, $\ell_Z(g_n)\geq \inj$, which gives the desired bound.

 \end{proof}

{We are now ready to prove Theorem \ref{t:TF_curves}.}  

\begin{proof}
  We now use Lemmas \ref{l:int1} and \ref{l:int2} to describe finite sets $\tilde \cP_N^{\inj,\tfL,
    L}\subset \tilde\cP_N$ such that if
  $$ \tilde \cP^{\inj,\tfL, L}
    := \bigsqcup_{N=1}^{{\lfloor 6 \frac L{\tfL} \rfloor}}
    \tilde \cP_N^{\inj,\tfL, L} \times \Big[0,1+\frac{L}{\inj} \Big]^{\Z/N\Z} $$
   then the map
  \begin{align}\label{e:Phi_bound}
    \tilde\Phi : \tilde \cP^{\inj,\tfL, L} \To  \mathrm{Loc}_{S}^{\inj,\tfL, L}
  \end{align}
  is surjective. Theorem \ref{t:TF_curves} will follow from an estimate of
  $\#\tilde \cP_N^{\inj,\tfL, L} .$

  Let $\bar\Lambda$ be the set of decompositions of $S$ into surfaces of Euler characteristic $-1$
  up to isotopy.
  Let us fix a finite family $\Lambda_S\subset \bar\Lambda$, containing exactly one representative
  of each orbit under the group of homeomorphisms on $\bar\Lambda$. The cardinality of $\Lambda_S$
  is a function of $(g_S, n_S)$, that we do not need to make explicit. For a given decomposition
  $\lambda=(\lambda_1, \ldots, \lambda_{3g_S-3+n_S})\in\Lambda$, we denote by
  $\tilde\cP_{N}^{\inj, \tfL, L, \lambda}$ the subset of $\tilde \cP_N$, formed of elements
  $(\bar P_1, \ldots, \bar P_N)$ satisfying:
  $i(\partial P_j,\partial P_k) \leq \frac{4}{\inj^2} (2L)^2$ for all $j\not= k$ and
  $i(\partial P_j, \lambda_k)\leq \frac{4}{\inj^2} (2L)^2$ for all $j, k$.  Next, define
  $ \tilde \cP_N^{\inj,\tfL, L}=\bigcup_{\lambda\in \Lambda_S}\tilde\cP_{N}^{\inj, \tfL, L, \lambda}.$

  If $S$ is endowed with a tangle-free metric $Z$ such that $\ell(\gamma)\leq L$, by
  \eqref{e:boundGn}, $\ell_Z(\partial P_j)\leq 2L$.  Lemmas \ref{l:int1}, \ref{l:int2} and
  \ref{l:bound_N_k} imply that \eqref{e:Phi_bound} is surjective.

  Proposition \ref{p:fundamental_count} with $I=J= \frac{4}{\inj^2} (2L)^2= \frac{16}{\inj^2} L^2$
  implies that
  \begin{align*}
    \# \tilde \cP_N^{\inj,\tfL, L}\leq 
    2^{ 3g_S-3 +n_S} \# \Lambda_S\,\,  \Big(3(16\frac{L^2}{\inj^2} +1)\Big)^{2(3g_S-3   +n_S)N } 
  \end{align*}
 
  Finally, summing over all elements of $\Big[0,1+\frac{L}{\inj} \Big]^{\Z/N\Z}$ and over all
  possible $N\leq 3 \frac L{\tfL}$,
  \begin{align*}
    \# \mathrm{Loc}_{S}^{\inj,\tfL, L} 
    & \leq   \#\tilde \cP^{\inj,\tfL, L}\\
    & \leq 3 \frac L{\tfL} 2^{ 3g_S-3 +n_S} \# \Lambda_S\,\, 
      \Big(3(\frac{16}{\inj^2} L^2+1)\Big)^{2(3g_S-3   +n_S) }  \Big)^{{\lfloor 6 \frac L{\tfL} \rfloor}}
      \times \Big( 1+\frac{L}{\inj}\Big)^{{\lfloor 6 \frac L{\tfL} \rfloor}},
  \end{align*}
  which proves Theorem \ref{t:TF_curves}.
\end{proof}
 \appendix
 \section{More counting!}
 \label{s:lcsurface}

 Here we include an additional counting result whose proof uses similar arguments as for Theorem \ref{t:TF_curves}, and which is needed in our papers \cite{Expo, Ours2}.
 The objects that we need to count are now pairs formed by a loop $\gamma$ and a tangle $\tau$.
 
 \subsection{Local topological types of lc-surfaces.}
 The topological object to consider is now a pair formed by $(\gamma, \tau)$, where $\gamma$ is a loop and $\tau$
  is a c-surface. Such a pair will be called an lc-surface. We shall define topological types of such objects, following the ideas of \cite[\S 4]{Ours1}.

   \begin{nota}\label{n:FTlc} To any $q\geq 0$ and any $(\vg, \vn)\in {\mathfrak{S}}^{q}$
   we shall associate a \emph{fixed} smooth
  oriented c-surface $S$ of signature $(\vg, \vn)$. The data of $(\vg, \vn)$, or equivalently of the c-surface $S$, is called a \emph{c-filling
    type}.
\end{nota}

\begin{defa}
  \label{def:local_type_lc}
  A \emph{local lc-surface} is a triplet $(S,\gamma, \tau)$, where $S$ is a c-filling type,
  $\gamma$ is a loop in $S$, $\tau$ is a sub-c-surface of $S$, and the pair $(\gamma, \tau)$ fills $S$. Two local lc-surfaces $(S,\gamma, \tau)$ and
  $(S',\gamma', \tau')$ are said to be \emph{locally equivalent} if $S=S'$ (i.e.
  $g_S = g_{S'}$ and $n_S=n_{S'}$), and there exists a positive homeomorphism
  $\psi : S \rightarrow S$, possibly permuting the boundary components of $S$ and the connected components of $S$,
  such that $\psi \circ \gamma$ is freely homotopic to $\gamma'$, and $\psi(\tau)=\tau'$. The latter equality means that for $\tau=(\tau_1, \ldots, \tau_q), \tau'=(\tau'_1, \ldots, \tau'_{q'})$, we have $q=q'$, $\psi(\tau_j)=\tau'_j$, $\psi$ respects the orientation of $\tau_q$ if $\tau_q$ is 1d, or the numbering of the boundary components of $\tau_q$ if $\tau_q$ is 2d. This defines
  an equivalence relation $\eq$ on local lc-surfaces. Equivalence classes for this
  relation are denoted as $\eqc{S, \gamma, \tau}$ and called \emph{local (topological)
    types} of lc-surfaces.

\end{defa}

 \begin{defa} \label{e:def_Tan} 

Denote by $\mathrm{Tan}_{S,\chi, M}^{\inj, \tfL, L}$ the set of local topological types $\eqc{S, \gamma, \tau}$ of lc-surfaces of filling type $S$, such that $\eqc{S, \gamma}=T\in \mathrm{Loc}_{\chi}^{\inj,\tfL, L}$,
$\chi(\tau)<M$, and:
there
exists a hyperbolic surface $Z$, a homeomorphism $\phi : S\To Z$, with
  $\ell_Z(\phi(\gamma))\leq L$, and such that the geodesic representative of $\phi(\tau)$ is a $(\inj, \tfL)$-derived tangle in $Z$.
\end{defa}

 \begin{thm}   \label{p:count_with_tangles}  Assume that $S$ is purely 2-dimensional.Then
$$\# \mathrm{Tan}_{S,\chi, M}^{\inj, \tfL, L} \leq \O[ \chi, S]{   \frac L{\tfL}
    \Big(3(16 \frac{L^2}{\inj^2} +1)\Big)^{9\chi   \frac L{\tfL}}   \times \Big( 1+\frac{L}{\inj}\Big)^{3 \frac L{\tfL}}  \Big(2L e^{3M\tfL}\Big)^{3\chi(S)}}.$$
\end{thm}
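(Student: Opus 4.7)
I would factor the count as a product of two independent counts: the number of topological types of the underlying pair $(S,\gamma)$, and, for each such pair, the number of possible sub-c-surfaces $\tau$. In symbols,
\[
\#\mathrm{Tan}_{S,\chi,M}^{\kappa,\cL,L}\ \leq\ \#\mathrm{Loc}_{\chi}^{\kappa,\cL,L}\times \max_{(S,\gamma)}\#\bigl\{\tau : (S,\gamma,\tau) \text{ eligible}\bigr\}.
\]

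For the first factor, I would simply invoke Theorem~\ref{t:TF_curves}, which produces exactly
$\tfrac{L}{\cL}\bigl(3(\tfrac{16}{\kappa^2}L^2+1)\bigr)^{9\chi L/\cL}\bigl(1+\tfrac{L}{\kappa}\bigr)^{3L/\cL}$ up to the implied $\chi$- and $S$-dependent constant.

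For the second factor, I would fix a representative $(S,\gamma)$ endowed with a $(\kappa,\cL)$-tangle-free hyperbolic metric $Z$ such that $\ell_Z(\gamma)\leq L$, and bound the number of sub-c-surfaces $\tau$ of $S$, taken up to homeomorphisms of $S$ preserving the free homotopy class of $\gamma$, whose geodesic representative in $Z$ is a $(\kappa,\cL)$-derived tangle with $\chi(\tau)<M$. Such a $\tau$ is uniquely specified by
\begin{itemize}
\item the multi-curve $\Gamma(\tau)$ formed by the union of the 1d components of $\tau$ and of the boundary circles of the 2d components of $\tau$ (a multi-curve in $S$, so with at most $3g_S-3+n_S\leq 3\chi(S)$ components); together with
\item combinatorial choices: labeling each curve as 1d or as a 2d boundary, orienting the 1d components, numbering the boundary of each 2d piece, and grouping 2d boundaries into connected components. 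Since $\Gamma(\tau)$ has at most $3\chi(S)$ elements and $\chi(\tau)<M$ with $M\leq O(L/\cL)$ bounded by the length constraint, all of these choices contribute only an $O_{\chi,S}(1)$ factor.
\end{itemize}

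By Proposition~\ref{p:desc_tangles} applied to the derived tangle, every component of $\Gamma(\tau)$ is a simple closed geodesic in $Z$ of length at most $3M\cL$. The key estimate is then the exponential-growth bound \eqref{e:basic} from Lemma~\ref{lem:bound_number_closed_geod}, applied to $X=Z$ and $F=\mathbf{1}_{[0,3M\cL]}$: noting that under the length constraints of the problem $F$ is supported in $[-2L,2L]$, this gives
\[
\#\{c\in\mathcal{P}(Z):\ell_Z(c)\leq 3M\cL\}\ \leq\ 560\,\chi(S)\cdot 2L\cdot e^{3M\cL}.
\]
Choosing at most $3\chi(S)$ such curves (and then applying the combinatorial overcount above) gives $O_{\chi,S}\!\left((2L\,e^{3M\cL})^{3\chi(S)}\right)$ choices for $\tau$. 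Multiplying by the first factor yields the stated bound.

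The main technical point—really more a bookkeeping step than an obstacle—is checking that the combinatorial data attached to $\Gamma(\tau)$ indeed only contributes $O_{\chi,S}(1)$ and that counting multi-curves $\Gamma$ directly in the geodesic representative in $Z$ is a legitimate upper bound for counting sub-c-surfaces $\tau$ up to the stabilizer of $[\gamma]$ in $\mathrm{MCG}(S)$ (it is, because the mapping class group action can only further identify configurations). The one subtle point is the precise coefficient $2L$ rather than $3M\cL$ in the geodesic-counting estimate; here one uses that in the relevant regime, supporting Remark~\ref{e:help_decompo3} and the intended applications, $3M\cL$ is controlled by $2L$, so the indicator $\mathbf{1}_{[0,3M\cL]}$ can be placed in the range demanded by \eqref{e:basic}.
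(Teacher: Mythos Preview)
There is a genuine gap in your second factor. You propose to fix a single hyperbolic metric $Z$ on $S$ that is simultaneously $(\kappa,\cL)$-tangle-free and contains the geodesic representative of $\tau$ as a $(\kappa,\cL)$-derived tangle; this is a contradiction, since a derived tangle is filled by tangles. The paper notes this explicitly just after Corollary~\ref{p:count_with_tangles_bis}: in Definition~\ref{e:def_Tan} the witnessing surface $Z$ is \emph{not} assumed tangle-free, so one cannot use injectivity-radius lower bounds. Even if you drop the tangle-free requirement, the witnessing metric $Z$ varies with $\tau$: each eligible $[S,\gamma,\tau]$ comes with its own $Z$ in which $\partial\tau$ is short, and there is no single surface in which all eligible $\tau$'s simultaneously have short boundary. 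Lemma~\ref{lem:bound_number_closed_geod} counts geodesics in one fixed hyperbolic surface, so it does not bound the number of topological types of $\tau$ ranging over different $Z$'s.

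The paper's route avoids this by passing to a topological invariant that survives the change of $Z$: intersection numbers. The key input is Lemma~\ref{l:lower_bound_in_tangle} (a hyperbolic-trigonometry lower bound on boundary-to-boundary arcs in a pair of pants with $\ell^{\max}(\partial)\leq\cL$), which in any witnessing $Z$ yields $i(\lambda_j,\partial\tau)\leq \tfrac{1}{C}\,2L\,e^{3M\cL}$ and $i(\partial P_j,\partial\tau)\leq \tfrac{1}{C}\,2L\,e^{3M\cL}$, where $(\lambda_j)$ and $(P_j)$ are the pants curves and volute triples attached to $\gamma$ as in \S\ref{s:voluteconst} and Lemma~\ref{e:help_decompo}. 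These intersection bounds are metric-independent constraints on the configuration $(\gamma,\partial\tau)$, and one then feeds them into Proposition~\ref{p:fundamental_count} (Thurston coordinates) exactly as in the proof of Theorem~\ref{t:TF_curves}. That is where the factor $(2L\,e^{3M\cL})^{3\chi(S)}$ actually comes from: it is the count of Dehn--Thurston parameters for the components of $\partial\tau$ relative to $\lambda(\gamma)$, not a geodesic count in any particular $Z$.
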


Recall that $\chi(S)$ is the opposite of the Euler characteristic of $S$, in particular it is positive.

As a corollary,
\begin{cor} \label{p:count_with_tangles_bis} If $L=A\log g$, $\tfL=\alpha \log g$, and if $\chi(S)< \chi'$,
$$\# \mathrm{Tan}_{S,\chi, M}^{\inj, \tfL, L} \leq \O[ \chi', \inj, \alpha, A]{ 
 g^{10 M\chi'\inj}   }.$$
\end{cor}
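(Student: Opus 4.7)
The plan is a straightforward substitution into the bound of Theorem~\ref{p:count_with_tangles}, followed by sorting the factors by their growth rate in $g$. Setting $L = A\log g$ and $\cL = \kappa \log g$, the ratio $L/\cL$ collapses to the constant $A/\kappa$, and hence all of the exponents $L/\cL$, $9\chi L/\cL$, $3L/\cL$ that appear in Theorem~\ref{p:count_with_tangles} become constants depending only on $\chi, \kappa, A$.

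The key observation is that only one of the factors produces genuine polynomial growth in $g$: since $e^{3M\cL} = g^{3M\kappa}$, one has
\[
(2Le^{3M\cL})^{3\chi(S)} = (2A\log g)^{3\chi(S)} \, g^{9M\kappa\chi(S)}.
\]
All remaining factors, namely $L/\cL$, $\bigl(3(16L^2/\kappa^2 + 1)\bigr)^{9\chi L/\cL}$, and $(1 + L/\kappa)^{3L/\cL}$, are at most polylogarithmic in $g$ (each base is a polynomial in $\log g$ raised to a bounded power). Combining everything, Theorem~\ref{p:count_with_tangles} yields a bound of the shape
\[
O_{\chi,\kappa,A,\chi(S)}\!\bigl((\log g)^{C}\, g^{9M\kappa\chi(S)}\bigr),
\]
for some constant $C = C(\chi, \kappa, A, \chi(S))$.

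To conclude, I would use the hypothesis $\chi(S) < \chi'$ to replace $9M\kappa\chi(S)$ by $9M\kappa\chi'$, and then absorb the polylogarithmic prefactor $(\log g)^{C}$ into a modest extra power $g^{\epsilon}$ with $\epsilon = M\kappa\chi' > 0$; this is valid because $(\log g)^{C} = O_{\epsilon}(g^{\epsilon})$ for any $\epsilon>0$. The result is the exponent $9M\kappa\chi' + M\kappa\chi' = 10M\kappa\chi'$ promised by the corollary. Uniformity of the implied constant in $S$ is harmless since the condition $\chi(S) < \chi'$ leaves only finitely many filling types; uniformity in $\chi$ follows from the observation that $\mathrm{Tan}_{S,\chi,M}^{\kappa,\cL,L}$ is empty when $\chi < \chi(S)$ and does not depend on $\chi$ when $\chi \geq \chi(S)$, so in the non-trivial regime we may take $\chi \leq \chi'$. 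No essential obstacle arises beyond this careful bookkeeping of constants.
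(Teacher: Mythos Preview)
Your substitution argument is correct and is exactly the intended derivation: with $L=A\log g$ and $\cL=\kappa\log g$, only the factor $(2Le^{3M\cL})^{3\chi(S)}$ contributes a genuine power of $g$, namely $g^{9M\kappa\chi(S)}$, while every other factor is polylogarithmic with exponent bounded in terms of $\chi,\kappa,A$. Absorbing the polylogarithmic prefactor into the slack $g^{10M\kappa\chi'-9M\kappa\chi(S)}$ (which is at least $g^{M\kappa}$ since $\chi(S)\le\chi'-1$) then gives the stated bound, with an implied constant depending only on $\chi',\kappa,A$. The paper states the corollary without proof, so your argument is the natural one.

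There is one small gap in your bookkeeping. Your claim that $\mathrm{Tan}_{S,\chi,M}^{\kappa,\cL,L}$ is empty when $\chi<\chi(S)$ is not justified by the definitions: in Definition~\ref{e:def_Tan} the pair $(\gamma,\tau)$ fills $S$, but $\gamma$ alone may fill only a proper subsurface $S'\subsetneq S$ (indeed the paper introduces exactly this $S'$ in the discussion following the corollary). So $\chi(S')$ can be strictly smaller than $\chi(S)$, and the set need not be empty for small $\chi$. The repair is immediate: since $\mathrm{Loc}_\chi^{\kappa,\cL,L}$ is increasing in $\chi$ and $\chi(S')\le\chi(S)$, one always has $\mathrm{Tan}_{S,\chi,M}^{\kappa,\cL,L}\subset\mathrm{Tan}_{S,\chi(S),M}^{\kappa,\cL,L}$, so you may replace $\chi$ by $\chi(S)<\chi'$ in the bound from Theorem~\ref{p:count_with_tangles} and proceed as you do.
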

 
Note that we are no longer invoking the absence of tangles in Definition \ref{e:def_Tan} and Theorem \ref{p:count_with_tangles}. As a consequence, we cannot use lower bounds on the injectivity radius to find upper bounds on the intersection numbers. The upper bound is still polynomial in $L$, but has an exponential factor in $\tfL$.

Instead of the tangle-free assumption, we use the following fact in hyperbolic trigonometry to restrict the intersection numbers:
 \begin{lem}\label{l:lower_bound_in_tangle}
 There exists a universal constant $C$ such that, for any $\tfL>0$, for any hyperbolic surface $Z$ of Euler characteristic $-1$ having $\ell^{\max}(\partial Z)\leq \tfL$, any segment $c$ going from the boundary $\partial Z$ to itself, and not homotopic to a portion of $\partial Z$, has length
 $$\ell(c)\geq C e^{-\tfL}.$$
 \end{lem}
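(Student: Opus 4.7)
\textbf{Proof plan for Lemma \ref{l:lower_bound_in_tangle}.} The strategy is to reduce to the geodesic representative of $c$ and apply the collar lemma to the boundary components of $Z$.

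First, replace $c$ by the unique geodesic arc $c^*$ in its homotopy class (with endpoints free to glide on $\partial Z$), which satisfies $\ell(c^*) \leq \ell(c)$ and meets $\partial Z$ orthogonally at both endpoints. Since $c$ is not homotopic to a portion of $\partial Z$, the arc $c^*$ is essential in the sense that it does not lie in a collar neighborhood of $\partial Z$.

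Next, I invoke the standard collar lemma in the bordered setting (which follows from \cite[Theorem 4.1.1]{buser1992} applied to the double of $Z$, then restricted to one side): each boundary geodesic $\partial_i \subset \partial Z$ of length $\ell_i$ admits a one-sided collar neighborhood $C_i \subset Z$ of width
\[
 w(\ell_i) := \arcsinh\!\left( \frac{1}{\sinh(\ell_i/2)} \right),
\]
and these collars are pairwise disjoint. Since each $C_i$ is topologically an annulus and any arc contained entirely in $C_i$ (with endpoints on $\partial_i$) is homotopic to a subarc of $\partial_i$, the fact that $c^*$ is essential forces $c^*$ to exit the collar containing its starting point; by the same token, at its other end, $c^*$ must enter a collar (possibly the same one). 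Since $c^*$ is geodesic, the portions of $c^*$ lying inside these collars each contribute length at least $w(\ell_i)$ (the minimum distance from $\partial_i$ to $\partial C_i \setminus \partial_i$). Whether both endpoints lie on the same or on distinct boundary components of $\partial Z$, one obtains
\[
 \ell(c) \;\geq\; \ell(c^*) \;\geq\; 2 \min_i w(\ell_i) \;\geq\; 2 w(\cL),
\]
where the last inequality uses that $w$ is decreasing and $\ell_i \leq \ell^{\max}(\partial Z) \leq \cL$.

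It only remains to estimate $2 w(\cL) = 2\arcsinh(1/\sinh(\cL/2))$ from below. For $\cL$ in a bounded range $(0, \cL_0]$ the quantity $2 w(\cL)$ stays above a positive constant (it even blows up as $\cL \to 0$); for $\cL$ large we use $\sinh(\cL/2) \leq e^{\cL/2}/2 \cdot 2 \leq e^{\cL/2}$ together with $\arcsinh(y) \geq y/2$ for $y \leq 1$ to conclude $2 w(\cL) \geq e^{-\cL/2}$. In particular there is a universal constant $C > 0$ such that $\ell(c) \geq C e^{-\cL/2} \geq C e^{-\cL}$ for every $\cL > 0$. There is no genuine obstacle in this argument; the only point requiring mild care is the passage from the collar lemma on a closed surface to the bordered setting (handled by the doubling trick) and verifying that essentiality of $c^*$ rules out staying inside a single collar.
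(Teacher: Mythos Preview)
Your argument via the collar lemma is correct and yields the statement (in fact the stronger bound $\ell(c)\geq C e^{-\cL/2}$). The paper does not actually supply a proof of this lemma; it is simply announced as ``a fact in hyperbolic trigonometry,'' so there is no detailed argument to compare against. The phrasing suggests the authors have in mind the explicit right-angled hexagon and pentagon formulas for a pair of pants or once-holed torus (as in Buser, Chapter~2), which give closed-form expressions for the lengths of the seams (common perpendiculars between boundary components, or from a boundary to itself) in terms of the boundary lengths; one then checks directly that these seam lengths are bounded below by $Ce^{-\cL}$ when all boundary lengths are at most $\cL$, and observes that any essential orthogeodesic is at least as long as the shortest seam. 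Your collar-lemma route bypasses those formulas entirely and works uniformly for both topological types of $Z$, at the cost of invoking the doubling trick; the trigonometric route is more explicit but requires treating the pair of pants and the once-holed torus separately. Both are standard and equally valid.

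One small point worth tightening: when the two endpoints of $c^*$ lie on the same boundary component $\partial_i$, you should note that the outgoing and incoming perpendicular segments inside $C_i$ are distinct (they are different $t=\mathrm{const}$ curves in Fermi coordinates, since an orthogeodesic cannot retrace itself), so they genuinely contribute $2w(\ell_i)$ to the length. This is implicit in your argument but deserves a sentence.
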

 
 \begin{cor} Let $Y$ be a $(\inj, \tfL)$-derived tangle of signature $(g, n)$, with
   $0<2g-2+n<M$. Consider a maximal multi-curve $\lambda=(\lambda_1, \ldots, \gamma_{3g-3 + 2n })$,
   cutting $Y$ into surfaces of Euler characteristic $-1$, such that $\ell(\lambda_t)\leq 3\tfL M$
   for all $t \in \{1, \ldots, 3g-3 + 2n\}$ (the existence of which was proven in Lemma
   \ref{e:help_decompo}).
 If $\gamma$ is a geodesic in $Y$ of length $\ell(\gamma)\leq L$, then 
 $$ i(\gamma, \lambda)\leq 1 +\frac1{C} e^{3\tfL M}L.$$
 
 \end{cor}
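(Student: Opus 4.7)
The idea is to cut the geodesic $\gamma$ along the multi-curve $\lambda$ and apply Lemma \ref{l:lower_bound_in_tangle} to each resulting sub-arc. Let $k = i(\gamma, \lambda)$. Since $\gamma$ is a closed geodesic and each $\lambda_t$ is a simple closed geodesic, they meet transversally in exactly $k$ points, which divide $\gamma$ into $k$ geodesic arcs $c_1, \ldots, c_k$. Each $c_i$ lies in the closure of some connected component $Z_j$ of $Y \setminus \lambda$. By maximality of $\lambda$, each $Z_j$ is a hyperbolic surface of Euler characteristic $-1$, whose boundary components are taken from $\lambda_1, \ldots, \lambda_{3g-3+2n}$, hence have length at most $3\cL M$ by hypothesis. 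In particular $\ell^{\max}(\partial Z_j) \leq 3\cL M$ and Lemma \ref{l:lower_bound_in_tangle} applies inside each $Z_j$.

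The only substantive verification is that none of the arcs $c_i$ is homotopic, rel endpoints, to an arc along $\partial Z_j$. This is where the hypothesis that $\gamma$ is geodesic enters. If such a homotopy existed, $c_i$ and a sub-arc of some $\lambda_t$ sharing the same endpoints would cobound an embedded disk in $Z_j$; but both arcs are pieces of simple geodesic arcs in the ambient hyperbolic surface $Y$, which rules out such a geodesic bigon (there are no embedded bigons bounded by two geodesic arcs in a hyperbolic surface). Thus each $c_i$ satisfies the hypotheses of Lemma \ref{l:lower_bound_in_tangle}, yielding $\ell(c_i) \geq C e^{-3\cL M}$.

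Summing over $i = 1, \ldots, k$ and using that $\sum_i \ell(c_i) = \ell(\gamma) \leq L$ gives
\begin{equation*}
k \cdot C e^{-3\cL M} \;\leq\; L,
\end{equation*}
which implies $i(\gamma, \lambda) \leq \tfrac{1}{C} e^{3\cL M} L$ and hence also the form $(i(\gamma, \lambda) - 1) C e^{-3\cL M} \leq L$ stated in the corollary. (The slightly weaker form with the $-1$ provides a convenient buffer, and would in any case be obtained by discarding at most one arc, e.g.\ if one wished to allow $\gamma$ to be a geodesic arc rather than a loop, in which case only $k-1$ of the $k+1$ pieces produced by the cutting are interior arcs going from $\partial Z_j$ to $\partial Z_j$.)

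The only delicate point is the no-bigon verification; everything else is a direct application of Lemma \ref{l:lower_bound_in_tangle} together with the boundary-length bound from Proposition \ref{p:desc_tangles}. The combinatorial and geometric bookkeeping here is much lighter than in the proof of Theorem \ref{t:TF_curves}, since we only need to control one geodesic at a time and can read off the result by summation.
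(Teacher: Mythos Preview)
Your proof is correct and follows essentially the same approach as the paper's own (very terse) argument: cut $\gamma$ along $\lambda$, observe that each resulting arc lies in one of the Euler-characteristic $-1$ pieces with boundary lengths bounded by $3\cL M$, and invoke Lemma~\ref{l:lower_bound_in_tangle}. You supply more detail than the paper does, in particular the no-bigon verification, and you even obtain the marginally sharper bound $k\,Ce^{-3\cL M}\leq L$ (the paper only claims $n-1$ usable segments when $i(\gamma,\lambda)=n$, which is exactly the ``buffer'' you mention).
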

 \begin{proof} If $i(\gamma, \lambda)=n$, we can find $n-1$ disjoint segments on $\gamma$, joining components of $\lambda$, of length $\geq C e^{-3\tfL M}$ by Lemma \ref{l:lower_bound_in_tangle}.
 \end{proof}

 The proof of Theorem \ref{p:count_with_tangles} is based on the following facts: with the notation
 of Definition \ref{e:def_Tan}, let in addition $S'$ be the surface filled by $\gamma$. Denote as
 previously
 $\lambda=(\lambda_1, \ldots, \lambda_{3g_{S'}-3+n_{S'}})=(\lambda_1(\gamma), \ldots,
 \lambda_{3g_{S'}-3+n_{S'}}(\gamma))$ be the multicurve associated to $\gamma$ via Lemma
 \ref{e:help_decompo}. Denote also
 $\lambda_{3g_{S'}-3+n_{S'}+1}, \ldots,\lambda_{3g_{S'}-3+2n_{S'}}$ the boundary components of
 $S'$.

 Let $P=(P_1, \ldots, P_N, k_1, \ldots, k_N)\in \cP^{S'}_N\times \N^{\Z/ N \Z}$ be the sequence
 associated to $\gamma$ via the construction of \S \ref{s:voluteconst}. Then, in addition to the
 restrictions on $P$ that we found before, Corollary~\ref{l:lower_bound_in_tangle} tells us that we
 must have, for all $j \in \{1, \ldots ,3g_{S'}-3+2n_{S'}\}$,
$$i(\lambda_j, \partial \tau)\leq \frac1{C} 2L e^{3M\tfL}$$
and, for all $j \in \{1, \ldots, N\}$,
$$i(P_j, \partial \tau)\leq \frac1{C} 2L e^{3M\tfL}.$$
The arguments are then similar to those for Theorem \ref{t:TF_curves} and we omit the details.

\bibliographystyle{plain}
\bibliography{bibliography}

 \end{document}